 \newtheorem{thm}{Theorem}[section]
 \newtheorem{cor}[thm]{Corollary}
 \newtheorem{lem}[thm]{Lemma}
 \theoremstyle{definition}
 \newtheorem{defn}[thm]{Definition}
 \theoremstyle{remark}
 \numberwithin{equation}{section}
\begin{document}
%
%
%
%
%
%
%
%
%

\title[Noncommutative $H^p$ spaces]
 {Noncommutative $H^p$ spaces associated with type 1 subdiagonal algebras}

\author[Ruihan Zhang]{Ruihan Zhang}

\address{
School of Mathematics and Statistics,
  Shaanxi Normal University,
  Xian, 710119, People's  Republic of  China}

\email{ruihan@snnu.edu.cn}

\thanks{$^*$ Corresponding author\\
\indent This
research was supported by the National Natural
   Science Foundation of China(No. 11771261).}
\author[Guoxing Ji]{Guoxing Ji$^*$   } 
\address{School of Mathematics and Statistics,
  Shaanxi Normal University,
  Xian, 710119, People's  Republic of  China}
\email{gxji@snnu.edu.cn}
\subjclass{46L52; 47L75;   46K50;  46J15      }

\keywords{von Neumann algebra; type 1 subdiagonal algebra; noncommutative $H^p$ space; invariant subspace}


\begin{abstract}
Let $\mathfrak A$ be a type 1 subdiagonal algebra in a $\sigma$-finite von Neumann algebra $\mathcal M$ with respect to a faithful normal conditional  expectation $\Phi$.   We  consider  a Riesz type factorization theorem in noncommutative $H^p$ spaces associated with $\mathfrak A$. It is shown that if  $1\leq r,p,q<\infty$ such that $\frac1r=\frac1p+\frac1q$, then for any  $h\in H^r$, there exist  $h_p\in H^p$ and $h_q\in H^q$ such that $h=h_ph_q$.      Beurling type invariant subspace theorem    for noncommutative  $L^p(1< p<\infty)$ space is obtained. Furthermore, we show that a  $\sigma$-weakly closed subalgebra  containing  $\mathfrak A$ of  $\mathcal M$ is  also a  type 1 subdiagonal algebra. As an application, We prove that the relative  invariant subspace lattice $Lat_{\mathcal M}\mathfrak A$ of $\mathfrak A$ in $\mathcal M$ is commutative.
\end{abstract}

\maketitle







\baselineskip18pt
\section{Introduction}



 Riesz factorization theorem and Beurling invariant subspace theorem in the classical Hardy spaces are well-known.
  There were many very interesting extensions to abstract function algebras.    Arveson in \cite{arv1} introduced the notion of subdiagonal algebras as the noncommutative analogue of  the classical Hardy  space $H^{\infty}(\mathbb T)$. It is worth noting that there were several successful examples showing that some famous mature theorems about the classical Hardy spaces have been extended to the noncommutative Hardy spaces based on  subdiagonal algebras(cf. \cite{ble,ble1,ble2,ble3,jig,jig1,jig2,jio,jis}).
 Marsalli and West \cite{mar2} gave a  Riesz factorization theorem  for finite  noncommutative $H^p$ spaces.
  One important extension is due to Blecher and    Labuschagne on Beurling type invariant subspace  theorems for a  finite subdiagonal algebra in \cite{ble4}. Recently, Labuschagne in \cite{lab} extended  their results to noncommutative $H^2$ for  maximal subdiagonal algebras in a $\sigma$-finite von Neumann algebra based on noncommutative $H^p$ spaces in \cite{jig}. We also  discussed    Riesz type factorization theorem in noncommutative $H^1$  and  Beurling type invariant subspace theorem for $L^1(\mathcal M)$ associated with  a type 1 subdiagonal algebra in the sense that  every right invariant subspace of  a maximal subdiagonal algebra in  noncommutative  $H^2$ is of    Beurling type in \cite{jig2,jig3}. Do     Riesz factorization theorem and   Beurling type invariant subspace theorem when  $ 1<p<\infty $ hold  for     type 1 subdiagonal algebras? We consider these problems in this paper.
     We
   firstly
   recall some notions.

  Let $\mathcal M$ be
a $\sigma$-finite von Neumann algebra acting on a complex  Hilbert
$\mathcal H$. $Z(\mathcal M)=\mathcal M\cap\mathcal M^{\prime}$ is the center of $\mathcal M$, where $\mathcal M^{\prime}$ is the commutant of $\mathcal M$. If $Z(\mathcal M)=\mathbb C I$,
the multiples of identity $I$, then $\mathcal M$ is said  to be a factor.  We denote by $\mathcal M_*$ and $\mathcal M_p$  the space of all
$\sigma$-weakly continuous linear functionals of $\mathcal M$ and the set of all projections in $\mathcal M$ respectively. Let
$\Phi$  be  a faithful normal conditional expectation from $\mathcal
M$ onto  a von Neumann subalgebra   $\mathfrak D$. Arveson \cite{arv1} gave the following definition. A subalgebra
$\mathfrak A$ of $\mathcal M$, containing $\mathfrak D$, is called a
subdiagonal algebra of $\mathcal M$ with respect to $\Phi$ if

(i) $\mathfrak A \cap \mathfrak A^{*} =\mathfrak D$,

(ii) $\Phi$ is multiplicative on $\mathfrak A$, and

(iii)  $\mathfrak A + \mathfrak A^*$ is $\sigma$-weakly dense in
$\mathcal M$.

\noindent The algebra $\mathfrak D$ is called the diagonal of
$\mathfrak A$.
  we  may assume that  subdiagonal
 algebras are  $\sigma$-weakly closed without loss generality(cf.\cite{arv1}).

  We say that
$\mathfrak A$ is a maximal subdiagonal algebra in $\mathcal M$ with
respect to $\Phi$ in case that $\mathfrak A$ is not properly
contained in any other subalgebra of $\mathcal M$ which is
subdiagonal with respect to $\Phi$. Put $\mathfrak A_0 =\{X \in
\mathfrak A: \Phi(X)=0 \}$ and  $\mathfrak A_m =\{X \in \mathcal M:
\Phi(AXB)=\Phi(BXA)=0,~ \forall A \in \mathfrak A,~ B \in \mathfrak
A_0 \}$.
 By   \cite[Theorem 2.2.1]{arv1}, we recall that $\mathfrak A_m $ is a maximal
 subdiagonal algebra of $\mathcal M$ with respect to $\Phi$ containing
 $\mathfrak A$.  $\mathfrak A$ is said to be finite if there is a faithful normal finite trace $\tau$ on $\mathcal M$ such that $\tau\circ\Phi=\tau$.  Finite subdiagonal algebras are  maximal subdiagonal(cf.\cite{exe}).

 We  next recall Haagerup's noncommutative $L^p$ spaces associated with a
 general von Neumann algebra $\mathcal M$(cf.\cite{haa,ter}). Let $\varphi$ be a faithful
 normal state on $\mathcal M$ and  let $\{\sigma_t^{\varphi}:t\in\mathbb R\}$
  be the modular automorphism group of $\mathcal M$ associated with $\varphi$ by Tomita-Takesaki theory.
   We consider  the crossed product
 $\mathcal N=\mathcal M\rtimes_{\sigma^{\varphi}} \mathbb R$ of $\mathcal M$
 by $\mathbb R$ with respect to $\sigma^{\varphi}$.
   We denote by $\theta$ the dual action of $\mathbb R$ on $\mathcal
  N$. Then $\{\theta_s:s\in \mathbb R\}$  is an  automorphisms group
  of $\mathcal N$.

Note that $\mathcal M=\{X\in \mathcal N: \theta_s(X)=X,\forall
s\in\mathbb R\}$.  $\mathcal N$ is a semifinite von Neumann algebra
and there is the normal faithful semifinite trace $\tau$ on
$\mathcal N$ satisfying
$$\tau\circ \theta_s=e^{-s}\tau,  \ \ \  \forall s\in\mathbb R.$$

According to Haagerup \cite{haa,ter}, the noncommutative $L^p$
space  $L^p(\mathcal M)$ for each $0< p\leq \infty$ is defined as
the set of all $\tau$-measurable  operators $x$   affiliated with
$\mathcal N$ satisfying
$$
\theta_s(x)=e^{-\frac{s}{p}}x,  \ \forall  s\in\mathbb R.$$ There is
a linear bijection  between the predual $\mathcal M_*$ of $\mathcal
M$ and $L^1(\mathcal M)$: $f\to h_f$. If we define
$\mbox{tr}(h_f)=f(I), f\in$$ \mathcal M_*$, then $$
\mbox{tr}(|h_f|)=\mbox{tr}(h_{|f|})=|f|(I)=\|f\|$$ for all
 $f\in \mathcal M_*$ and
$$|\mbox{tr}(x)|\leq\mbox{tr}(|x|)$$ for all  $x\in L^1(\mathcal M)$.
As in \cite{haa}, we define the operator $L_A$ and $R_A$ on
$L^p(\mathcal M)$($1\leq p<\infty)$  by $L_Ax=Ax$ and $R_Ax=xA$ for
all  $A\in\mathcal M$ and $x\in L^p(\mathcal M)$.
 Note that
$L^2(\mathcal M)$ is a Hilbert space with the inner product $(a,b)
=\mbox{tr}(b^*a)$, $ \forall a,b \in L^2(\mathcal M)$ and
  $A\to L_A$( resp. $A\to R_A$) is a
  faithful representation (resp. anti-representation) of $\mathcal
  M$ on $L^2(\mathcal M)$. We may identify $\mathcal M$ with
  $L(\mathcal M)=\{L_A:A\in \mathcal M\}$.

  We recall that
  $\{\sigma_{t }^{\varphi}: t \in \mathbb R\}$ is the  modular automorphism
group of $\mathcal M$ associated with $\varphi$. Let $h_0$ be the
noncommutative Radon-Nikodym  derivative of the dual weight of
$\varphi$ on $\mathcal N$ with respect to $\tau$.  Then $h_0$ is the
image($h_{\varphi}$) of $\varphi$ in $L^1(\mathcal M)$ and we have
that the following representation of
$\sigma_t^{\varphi}$(cf.\cite{kos1,kos2}):
\begin{equation}\sigma_t^{\varphi}(X)=h_0^{it}Xh_0^{-it},\ \forall t\in\mathbb
R, \ \forall X\in\mathcal M .
   \end{equation}
It is known that the noncommutative $H^p$ space  $H^p(\mathcal M)$  and $H_0^p(\mathcal M)$ in $L^p(\mathcal M)$
for any $1\leq p<\infty$  is  $H^p=H^p(\mathcal M)=[h_0^{\frac{\theta}{p}}\mathfrak Ah_0^{\frac{1-\theta}p}]_p$ and $H_0^p=H_0^p(\mathcal M)=[h_0^{\frac{\theta}{p}}\mathfrak A_0h_0^{\frac{1-\theta}p}]_p$ for any $\theta\in[0,1]$
   from \cite[Definition
2.6]{jig} and \cite[Proposition 2.1]{jig1}. If $p=\infty$, then we  identify  $H^{\infty}$ as $\mathfrak A$ and $H_0^{\infty}$ as $\mathfrak A_0$.

We introduce the  notion of column $L^p$-sum in the  noncommutative $L^p$-space $L^p(\mathcal M)(1\leq p\leq \infty)$ for a $\sigma$-finite von Neumann algebra $\mathcal M$ given in \cite{js} by Junge and Sherman. For any subset $S\subseteq L^p(\mathcal M)$,   $[S]_p$ denotes  the closed linear span of  $S$ in $L^p(\mathcal M)$. If $X$ is a closed subspace of $L^p(\mathcal M)$, and
$\{X_i : i\in\Lambda \}$  is a family  of  closed subspaces
of $X$ such that  $X=[\bigvee\{X_i:i\in\Lambda\}]_p$ with the property that $X_j^*X_i=\{0\}$ if $i\not=j$,
then we say that X is the internal column $L^p$-sum  $\oplus_{i\in\Lambda}^{col}X_i$. If $p=\infty$, we  assume
that $X $ and $\{X_i:i\in\Lambda\}$ are $\sigma$-weakly closed, and the  closed linear span is taken with the $\sigma$-weak topology.  For symmetry,  if $X_jX_i^*=\{0\}$ if $i\not=j$ and  $X=[\vee\{X_i:i\in\Lambda\}]_p$, then we say that $X$ is the
  internal row $L^p$-sum  $\oplus_{i\in\Lambda}^{row}X_i$.

  Following \cite{ble4,lab}, we define the right wandering subspace of a right invariant subspace
$\mathfrak M\subseteq L^2(\mathcal M)$, that is, $\mathfrak M\mathfrak A\subseteq \mathfrak M$,   to be the space $W = \mathfrak M\ominus [\mathfrak M\mathfrak A_0]_2$.  We say that $\mathfrak M$ is of  type 1 if $W$ generates $\mathfrak M$ as an
$\mathfrak A$-module (that is, $\mathfrak M = [W\mathfrak A]_2$). We   say that $\mathfrak M$ is  of type 2 if $W = \{0\}$.
 Note that every   right invariant subspace $\mathfrak M$ is an $L^2$-column sum $\mathfrak M=\mathfrak N_1\oplus^{col}\mathfrak N_2$, where     $\mathfrak N_i$  is of  type $i$ for $i=1,2$ from \cite[Theorem 2.1]{ble4} and
\cite[Theorem 2.3]{lab}.  In particular,   if $\mathfrak M$ is of   type 1, then $\mathfrak M$ is of the Beurling type, that is, there exists a family  of  partial isometries $\{U_n:n\geq 1\}$ in $\mathcal M$ satisfying $U_n^*U_m=0$ if $n\not=m$ and $U_n^*U_n\in\mathfrak D$ such that $\mathfrak M=\oplus_{n}^{col}U_nH^2 $.
  We refer    to {\cite{ble4,lab} for  more details. Symmetrically,  we may consider left invariant subspaces for $\mathfrak A$ and a type 1 left invariant subspace $\mathfrak M$  may be represented as
 $\mathfrak M=\oplus_{n\geq 1}^{row}H^2V_n$ by a family of  partial isometries $\{V_n:n\geq1\}$ satisfying $V_nV_m^*=0$ if $n\not=m$ and $V_nV_n^*\in\mathfrak D$ if $n=m$ and this fact will be used frequently.
  Motivated by the column(resp. row)-sum as above, we call that   a family of partial isometries $\mathcal U=\{U_n:n\geq 1\}$ in $\mathcal M$ is column(resp. row) orthogonal if
  If $U_n^*U_m=0$(resp. $U_nU_m^*=0$) for $n\not=m$ and  $U_n^*U_n\in\mathfrak D$(resp. $U_nU_n^*\in\mathfrak D$) for all $n$(cf.\cite{jig3}).

In this paper, we continuously consider  type 1 subdiagonal algebras introduced in \cite{jig2}. Let $\mathfrak A$ be a type 1 subdiagonal algebra with respect  to $\Phi$ with diagonal $\mathfrak D$, that is, every invariant right invariant subspace $\mathfrak M\subseteq H^2$ of $\mathfrak A$ is of type 1. Note that there is a family of column orthogonal partial  isometries
$\mathcal U=\{U_n:n\geq 1\}$ such that $H_0^2=\oplus_{n}^{col}U_nH^2$ and those partial isometries together with the diagonal $\mathfrak D$ consist of generators of $\mathfrak A$(cf.\cite{jig2}). We will fixed those partial isometries throughout this paper.
 We give a Riesz type factorization theorem for non-commutative $H^P(1\leq p<\infty)$ which says that every element in $H^r$ is a product of two elements in $H^p$ and $H^q$ respectively with $\frac1r=\frac1p+\frac1q$ for any $1\leq r,p,q<\infty$ in Section 2. This gives an  answer  of a problem in \cite{jig2}.  Moreover,  we consider  a  Beurling type invariant subspace theorem  in  $L^p(\mathcal M)$ when $1<p<\infty$ in Section 3. In Section 4,  We determine  the structure of those  $\sigma$-weakly closed subalgebras containing  the  type 1 subdiagonal algebra $\mathfrak A$ in $\mathcal M$.  It is shown that those subalgebras are  of type 1. As an  application, we show that  the relative invariant subspace lattice $Lat_{\mathcal M}\mathfrak A=\{E\in\mathcal M_p:(I-E)AE=0,\ \forall A\in\mathfrak A\}$ of   $\mathfrak A$ in the von Neumann algebra $\mathcal M$  is  commutative in Section 5.

\section{Riesz   factorization in noncommutative $H^p$  spaces for type 1 subdiagonal algebras}

  In \cite{jig2}, we consider the Riesz factorization   in noncommutative $H^1$ for type 1 subdiagonal algebra $\mathfrak A$. It is shown that for any $h\in H^1$, there are two elements $h_i\in H^2(i=1,2)$ such that $ h=h_1h_2$.   Let $1\leq r,p,q< \infty$ such that  $\frac1r=\frac1p +\frac1q$ and $h\in H^r$. Does $h=h_ph_q$ for some $h_p\in H^p$ and $h_q\in H^q$?  We    consider this problem in this section.   For any subset $\mathcal  S\subseteq L^p(\mathcal M)$ and $ \mathcal K\subseteq L^q(\mathcal M)$, denote by $\mathcal S\mathcal K=[\{sk: s\in\mathcal S, k\in\mathcal K\}]_r$, the closed subspace  generated by  the subset $\{sk: s\in\mathcal S, k\in\mathcal K\}$   in $L^r(\mathcal M)$. We firstly recall that an element $h\in H^p$ is right(resp. left) outer in $[h\mathfrak A]_p=H^p$(resp. $[\mathfrak Ah]_p=H^p$).

\begin{lem} Let $\mathfrak A$ be a type 1 subdiagonal algebra. Then for any nonzero  $f\in L^2(\mathcal M)$, there exist a left outer element
$g\in H^2$ and a contraction $B\in \mathcal M$ such that $f=gB$.\end{lem}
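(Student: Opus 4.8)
The plan is to split the statement into a soft factorization step and a hard analytic core.

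\smallskip

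\emph{Step 1: reduction to a majorization.} First I would reduce matters to finding a \emph{left outer} $g\in H^2$ with $ff^*\le gg^*$ in $L^1(\mathcal M)$. Indeed, regarding $g$ as a $\tau$-measurable operator affiliated with $\mathcal N$, write its polar decomposition $g=v|g|$, let $r=v^*v$ be its right support, let $|g|^{-1}$ be the generalised inverse of $|g|$, and put $B:=g^{-1}f:=|g|^{-1}v^*f$. One checks directly that $gB=g\,g^{-1}f=\ell f$ with $\ell$ the left support of $g$, and that $\ell f=f$ since $ff^*\le gg^*$ forces the range projection of $ff^*$ to lie under $\ell$; that $BB^*=|g|^{-1}v^*(ff^*)v|g|^{-1}\le|g|^{-1}v^*(gg^*)v|g|^{-1}=r\le I$, so $\|B\|\le1$; and, applying $\theta_s$ to $f=gB$ and using $\theta_s(f)=e^{-s/2}f$, $\theta_s(g)=e^{-s/2}g$ together with uniqueness of the polar decomposition, that $\theta_s(B)=B$ for all $s\in\mathbb R$. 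Hence $B\in\mathcal M$ is the desired contraction. (If convenient one may first replace $f$ by $|f^*|=(ff^*)^{1/2}\in L^2(\mathcal M)_+$ via $f=|f^*|u$ with $u\in\mathcal M$ a partial isometry, since $ff^*=|f^*|^2$; this is inessential.)

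\smallskip

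\emph{Step 2: constructing the outer majorant.} It remains to build a left outer $g\in H^2$ with $gg^*\ge ff^*$, and this is where the type~1 hypothesis enters. The plan is to consider the left invariant subspace $\mathfrak M:=[\,H^2+\mathfrak A f\,]_2$ of $L^2(\mathcal M)$. It contains $H^2$, so $\mathfrak M^\perp\subseteq(H^2)^\perp$; and $\mathfrak A\mathfrak M\subseteq\mathfrak M$ gives $\mathfrak A^*\mathfrak M^\perp\subseteq\mathfrak M^\perp$. Using the identity $L^2(\mathcal M)=H^2\oplus(H_0^2)^*$ from noncommutative $H^p$ theory and passing to adjoints, $(\mathfrak M^\perp)^*$ is a \emph{right} invariant subspace of $H^2$ (in fact of $H_0^2$); by the type~1 assumption it is therefore of Beurling type, $(\mathfrak M^\perp)^*=\oplus_n^{col}V_nH^2$ for a column orthogonal family of partial isometries $\{V_n\}$ with $V_n^*V_n\in\mathfrak D$. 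Feeding this back through $L^2(\mathcal M)=H^2\oplus(H_0^2)^*$ identifies the structure of $\mathfrak M$ explicitly, and the membership $f\in\mathfrak M$ then lets one extract a generator $g\in H^2$ of the left $\mathfrak A$-module generated by $H^2$ and $f$ whose square function dominates $ff^*$; such a $g$ is left outer precisely because $[\mathfrak A g]_2$ is forced to fill out all of $H^2$. An equivalent route is to fix a full-support majorant in advance, say $b:=ff^*+h_0\in L^1(\mathcal M)_+$, and to realise $b=gg^*$ for a left outer $g\in H^2$ by the Szeg\H o/outer-function machinery for ($\sigma$-finite, type~1) subdiagonal algebras, which in this setting reduces to exactly the Beurling decomposition above. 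In either case $gg^*\ge ff^*$ and Step~1 finishes the argument.

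\smallskip

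\emph{Where the difficulty lies.} The heart of the matter is Step~2: the existence of an \emph{outer} element with prescribed, or at least dominating, modulus over a von Neumann algebra that need not be finite. This is exactly where type~1 is indispensable, as it converts the question into the Beurling structure of an invariant subspace of $H^2$, available via \cite{jig2}. The delicate points I expect to fight with are (i) verifying that the element $g$ extracted from $\mathfrak M$ is genuinely left outer and genuinely satisfies $gg^*\ge ff^*$ — best handled by first truncating to $V_1,\dots,V_N$, where honest finite column sums and the earlier type~1 results apply, and then passing to a monotone limit in $L^1(\mathcal M)$; and (ii) the bookkeeping with unbounded $\tau$-measurable operators in Step~1 (closability of $g^{-1}f$, monotonicity of support projections under $\le$), which is routine but must be spelled out.
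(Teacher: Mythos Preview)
Your Step~1 reduction is sound and would work, but you have missed a one-line shortcut that makes Step~2 unnecessary. The paper's proof simply observes that $\mathfrak A^*$ is again a type~1 subdiagonal algebra (with $H^2$ replaced by $(H^2)^*$), applies the \emph{right}-outer factorization already established in \cite[Theorem~3.5]{jig2} to $f^*\in L^2(\mathcal M)$, obtaining $f^*=B^*g^*$ with $g^*\in(H^2)^*$ right outer for $\mathfrak A^*$ and $B\in\mathcal M$ a contraction, and then takes adjoints: $f=gB$ with $[\mathfrak A g]_2=([\,g^*\mathfrak A^*\,]_2)^*=((H^2)^*)^*=H^2$. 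That is the entire argument.

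Your Step~2, by contrast, is essentially an attempt to reprove \cite[Theorem~3.5]{jig2} from scratch in the left-handed setting: the invariant-subspace manoeuvre you describe (look at $[H^2+\mathfrak A f]_2$, pass to the orthocomplement, invoke the Beurling structure, extract an outer generator) is exactly the shape of that proof, and the ``delicate points'' you flag --- genuine outerness of the extracted $g$, the inequality $gg^*\ge ff^*$, handling the column sum limit --- are precisely the work done there. So your plan is not wrong, but it reinvents a wheel that is already available and leaves the hard step as a sketch. The adjoint trick converts the left problem into the right problem for free; once you see it, there is nothing further to prove.
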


\begin{proof}
We consider the type 1 subdiagonal algebra $\mathfrak A^*$ and  $(H^2)^*$,. Then for any  $f^*\in L^2(\mathcal M)$, there exist a right outer  element $ g^*\in (H^2)^*$
 and a contraction $B\in\mathcal M$ such that $f^*=B^*g^*$ by \cite[Theorem 3.5]{jig2}. Then $f=gB$. Note that $[g^*\mathfrak A^*]_2=(H^2)^*$. Thus
 $[\mathfrak Ag]_2=H^2$, that is,  $g\in H^2$ is left outer. \end{proof}

Let $\mathfrak M\subseteq L^p(\mathcal M)$ be a closed subspace.  If $\mathfrak M\mathfrak A \subseteq \mathfrak M$(resp. $\mathfrak A\mathfrak M\subseteq \mathfrak M$), then we say that $\mathfrak M$ is a right(resp. left) invariant subspace.
\begin{lem} Let $\mathfrak A$ be a type 1 subdiagonal algebra and $2<p<\infty$. If $\mathfrak M\subseteq H^p$ is a right invariant subspace such that $\mathfrak M H^s=H^2$, then $\mathfrak M=H^p$, where $\frac1p+\frac1s=\frac12$.\end{lem}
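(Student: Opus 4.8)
The plan is to reduce the assertion to producing a single right outer element of $H^p$ inside $\mathfrak M$. Since $\mathfrak M$ is closed and right invariant, $h_0^{1/p}\in\mathfrak M$ forces $H^p=[h_0^{1/p}\mathfrak A]_p\subseteq\mathfrak M\subseteq H^p$, so the assertion $\mathfrak M=H^p$ is equivalent to $h_0^{1/p}\in\mathfrak M$. I will use the representations $H^s=[\mathfrak A h_0^{1/s}]_s$ and $H^2=[h_0^{1/p}\mathfrak A h_0^{1/s}]_2$ (the cases $\theta=0$ and $\theta=\frac2p$ of the descriptions of $H^s$ and $H^2$), together with H\"older's inequality, which give $H^pH^s\subseteq H^2$ and hence $[H^pH^s]_2=H^2$ with no appeal to a Riesz factorization theorem.

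The main line of argument would invoke the Beurling-type description of right invariant subspaces of $H^p$ available for the type 1 subdiagonal algebra $\mathfrak A$, writing $\mathfrak M=\oplus_n^{col}v_nH^p$ with $\{v_n\}$ a column orthogonal family of partial isometries in $\mathfrak A$ and $v_n^*v_n\in\mathfrak D$. Then $\mathfrak M H^s=\oplus_n^{col}v_n[H^pH^s]_2=\oplus_n^{col}v_nH^2$, so the hypothesis says exactly that the column sum $\oplus_n^{col}v_nH^2$ fills $H^2$. Passing to right wandering subspaces and using $H^2\ominus[H^2\mathfrak A_0]_2=L^2(\mathfrak D)$ and $v_nH^2\ominus[v_nH^2\mathfrak A_0]_2=v_nL^2(\mathfrak D)$, this forces the orthogonal decomposition $L^2(\mathfrak D)=\oplus_nv_nL^2(\mathfrak D)$, where $L^2(\mathfrak D)$ is the $L^2$-space of the diagonal (which contains $h_0^{1/2}$). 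In particular $v_nh_0^{1/2}\in L^2(\mathfrak D)$, and since an element $x\in\mathcal M$ with $xh_0^{1/2}\in L^2(\mathfrak D)$ must lie in $\mathfrak D$ (decompose $x=\Phi(x)+(x-\Phi(x))$ and note that $(x-\Phi(x))h_0^{1/2}\perp L^2(\mathfrak D)$), each $v_n$ lies in $\mathfrak D$. Consequently the $v_nv_n^*$ are mutually orthogonal projections in $\mathfrak D$ with $\sum_nv_nv_n^*=I$, so $h_0^{1/p}=\sum_nv_nv_n^*h_0^{1/p}=\sum_nv_n(v_n^*h_0^{1/p})$ converges in $L^p(\mathcal M)$; since $v_n^*\in\mathfrak D\subseteq\mathfrak A$ gives $v_n^*h_0^{1/p}\in\mathfrak A h_0^{1/p}\subseteq H^p$, we conclude $h_0^{1/p}\in\oplus_n^{col}v_nH^p=\mathfrak M$, i.e. $\mathfrak M=H^p$.

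The step I expect to be the main obstacle is the Beurling-type decomposition of a right invariant subspace of $H^p$ when $p\neq2$: the $H^2$ case is the combination of \cite{ble4,lab} with the type 1 hypothesis, but it does not transfer mechanically to $H^p$, since the natural intertwiner $x\mapsto xh_0^{1/s}$ from $H^p$ into $H^2$ is injective with dense range but not bounded below, so $L^p$-norm information is genuinely lost in passing to $H^2$. If one wishes to avoid that decomposition (e.g. because it is only established later), the substitute is a direct argument: first replace the hypothesis by $[\mathfrak M h_0^{1/s}]_2=H^2$ — this holds because $\mathfrak A$, being type 1, is maximal, so $\sigma_t^\varphi(\mathfrak A)=\mathfrak A$ and $\mathfrak M h_0^{1/s}$ is stable under right multiplication by the norm-dense set of $\sigma^\varphi$-analytic elements of $\mathfrak A$ (using $h_0^{1/s}A=\sigma^\varphi_{-i/s}(A)h_0^{1/s}$); then write $h_0^{1/2}=\lim_n\mu_nh_0^{1/s}$ with $\mu_n\in\mathfrak M$, apply the inner--outer factorization of the $\mu_nh_0^{1/s}\in H^2$ (\cite[Theorem 3.5]{jig2}, as in the proof of Lemma 2.1) together with column-orthogonality of the inner parts to replace $\{\mu_n\}$ by a sequence in $\mathfrak M$ that is bounded in $L^p(\mathcal M)$, and finally use reflexivity of $L^p(\mathcal M)$ for $1<p<\infty$ and weak closedness of $\mathfrak M$ to pass to a weak cluster point $\mu\in\mathfrak M$, which satisfies $\mu h_0^{1/s}=h_0^{1/2}$ by weak--weak continuity of $x\mapsto xh_0^{1/s}$ and hence $\mu=h_0^{1/p}$ by injectivity. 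Securing the norm-bounded approximating sequence is the real crux of this route.
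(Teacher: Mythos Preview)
Your primary route is circular within the paper's logical structure: the Beurling decomposition of a right invariant subspace of $H^p$ for $p\ne 2$ is Theorem~3.3, whose proof (for $2<p<\infty$ it passes through the dual case $1<q<2$) rests on Theorem~2.3, and Theorem~2.3 in turn invokes the present lemma. Even if one bypasses part of Theorem~3.3 by only taking the easy inclusion $\mathfrak M\subseteq\oplus_n^{col}v_nH^p$ coming from $[\mathfrak M h_0^{1/s}]_2=\oplus_n^{col}v_nH^2$, that inclusion alone does not let you conclude $h_0^{1/p}\in\mathfrak M$; your wandering-subspace argument needs the full equality $\mathfrak M=\oplus_n^{col}v_nH^p$, which is exactly the delicate direction. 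Your fallback route is honest about its own gap: manufacturing an $L^p$-bounded sequence in $\mathfrak M$ whose images under $x\mapsto xh_0^{1/s}$ converge to $h_0^{1/2}$ is not supplied, and I do not see how the inner--outer factorization of the approximants by itself yields the required $L^p$-control.

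The paper's proof sidesteps both difficulties by working on the annihilator. With $\frac1p+\frac1q=1$ and $\frac1s+\frac1t=1$ (so $\frac1q=\frac12+\frac1s$ and $\frac1t=\frac1p+\frac12$), it suffices to show $\mathfrak M^{\bot}\subseteq H_0^q$. Given $g\in\mathfrak M^{\bot}\subseteq L^q(\mathcal M)$, factor $g=g_1g_2$ with $g_1\in L^2$ and $g_2\in L^s$ via the polar decomposition, then use Lemma~2.1 to write $g_1=h_1B$ with $h_1\in H^2$ \emph{left} outer and $B\in\mathcal M$ a contraction, and set $h_2=Bg_2\in L^s$. Left outerness of $h_1$ together with the hypothesis gives $[\mathfrak M h_1]_t=[\mathfrak M\mathfrak A h_1]_t=[\mathfrak M H^2]_t=[\mathfrak M H^s H^p]_t=[H^2H^p]_t=H^t$, so $\mbox{tr}(xh_2)=0$ for all $x\in H^t$, hence $h_2\in H_0^s$ and $g=h_1h_2\in H^2H_0^s\subseteq H_0^q$. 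This duality-plus-left-outer argument uses only the $p=2$ factorization (Lemma~2.1) and the hypothesis $\mathfrak M H^s=H^2$; no $L^p$ Beurling theorem, and no reflexivity/compactness step, is needed.
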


\begin{proof}
Let  $\frac1p+\frac1q=1$ and $\frac1s+\frac1t=1$. Then $\frac12+\frac1s=\frac1q$ and $\frac1p+\frac12=\frac1t$.
Put $\mathfrak M^{\bot}=\{g\in L^q(\mathcal M):\mbox{tr}(gm)=0,\forall m\in\mathfrak M\}$. It is sufficient to show that $\mathfrak M^{\bot}=(H^p)^{\bot}=H_0^q$. It is trivial that $\mathfrak M^{\bot}\supseteq H_0^q.$ Take any $g\in \mathfrak M^{\bot}$. Let $g=V|g|$ be the polar decomposition. Then
$g=V|g|^{\frac{q}{2}}|g|^{\frac{q}{s}}=g_1g_2$, where $g_1=V|g|^{\frac{q}{2}}\in L^2(\mathcal M)$ and $g_2=|g|^{\frac{q}{s}}\in L^s(\mathcal M)$. Then $g_1=h_1B$ for a left outer element $h_1\in H^2$ and a contraction $B\in \mathcal M$ by Lemma 2.1. Put $h_2=Bg_2\in L^s(\mathcal M)$. Now $\mbox{tr}(mg)=\mbox{tr}(mg_1g_2)=\mbox{tr}(mh_1h_2)=0$ for all $m\in\mathfrak M$.
On the other hand, $[\mathfrak Mh_1]_t=[\mathfrak M \mathfrak A h_1]_t=[\mathfrak M H_2]_t=[\mathfrak Mh_0^{\frac12}\mathfrak A]_t=[\mathfrak Mh_0^{\frac1s}h_0^{\frac1p}\mathfrak A]_t=[[\mathfrak Mh_0^{\frac1s}\mathfrak A]_2H^p]_t=[H^2H^p]_t=H^t$.
Thus $\mbox{tr}(xh_2)=0$ for all $x\in H^t$. It follows that $h_2\in (H^t)^{\bot}=H_0^s$. Therefore
$g=h_1h_2\in H_0^q$. Consequently, $\mathfrak M=H^p$.
\end{proof}

 \begin{thm} Let $\mathfrak A$ be a type 1 subdiagonal algebra and  $\eta \in L^p(\mathcal M)(1\leq p<\infty)$  a nonzero vector. Then for any $\delta>0$, there exist a contraction $B\in\mathcal M$ and a right outer element  $h\in H^p$  with $\|h\|_p<\|\eta \|_p+\delta $ such that $\eta =Bh$. If $\eta\in H^p$, then we may have $B\in\mathfrak A$.\end{thm}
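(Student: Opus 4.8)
The plan is to reduce to the $L^2$-case, which is already available, and then to transport the factorization between the $L^p$-scales along the density $h_0$, using Lemma 2.2 to recover outerness when $p>2$. For a nonzero $\eta\in L^2(\mathcal M)$ the statement is \cite[Theorem 3.5]{jig2} (the result behind Lemma 2.1): since type $1$ right invariant subspaces are of Beurling type, $\mathfrak M:=[\eta\mathfrak A]_2$ equals $VeH^2$ for a partial isometry $V\in\mathcal M$ with $e=V^*V\in\mathfrak D$, and $\eta=V\eta_1$ with $[\eta_1\mathfrak A]_2=eH^2$. Then $h:=\eta_1+\varepsilon(1-e)h_0^{1/2}$ together with $B:=$ (the partial isometry $V$ with its initial projection cut down to $e$) satisfy $Bh=\eta$; $h$ is right outer in $H^2$ because its $(1-e)$-part fills in $(1-e)H^2$ through the outer element $(1-e)h_0^{1/2}$, and $\|h\|_2\le\|\eta\|_2+\varepsilon\|h_0^{1/2}\|_2<\|\eta\|_2+\delta$ once $\varepsilon$ is small. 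The genuine $\delta$ is forced by the possibility $e\neq1$.

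For general $p$, fix $s$ with $\frac1s=|\frac12-\frac1p|$, so that $\frac1p+\frac1s=\frac12$ when $p>2$. For $2<p<\infty$ we have $\eta h_0^{1/s}\in L^2(\mathcal M)$, so the case $p=2$ gives $\eta h_0^{1/s}=Bk$ with $B$ a contraction and $k\in H^2$ right outer. Put $h:=kh_0^{-1/s}$; approximating $k$ by $h_0^{1/2}a$ with $a$ entire for $\sigma^{\varphi}$ one has $kh_0^{-1/s}=h_0^{1/p}\sigma^{\varphi}_{-i/s}(a)$, whence $h\in H^p$ and $\eta=Bh$. To see $h$ is right outer, by Lemma 2.2 it suffices that $[h\mathfrak A]_pH^s=H^2$; using $\sigma^{\varphi}$-invariance of $\mathfrak A$, $h\,a\,h_0^{1/s}b=(hh_0^{1/s})\,\sigma^{\varphi}_{-i/s}(a)\,b=k\,\sigma^{\varphi}_{-i/s}(a)\,b$ with $\sigma^{\varphi}_{-i/s}(a)b\in\mathfrak A$ for entire $a$, so $[h\mathfrak A]_pH^s\supseteq[k\mathfrak A]_2=H^2$ and the reverse inclusion is clear. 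For $1\le p<2$ one argues symmetrically, either by passing to the dual scale $p'>2$ (where Lemma 2.2 applies) or by transporting the Beurling structure of $[\eta\mathfrak A]_p$ from $L^2$ and running the above fill-in directly in $L^p$. The bound $\|h\|_p<\|\eta\|_p+\delta$ in all cases comes from taking $\varepsilon$ small at the level $p=2$ and from the transport along $h_0^{\pm1/s}$ matching the two norms on the relevant dense set.

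For the refinement $\eta\in H^p$ one wants $B\in\mathfrak A$. Since $\eta h_0^{1/s}\in H^2$ as well, the transport reduces this to the case $p=2$: show that $\xi\in H^2$ forces $\xi=Bg$ with $B\in\mathfrak A$ and $g$ right outer. As $\mathfrak A$ is $\sigma$-weakly closed, $\mathfrak A={}^{\perp}(\mathfrak A^{\perp})$ with $\mathfrak A^{\perp}=[\mathfrak A_0h_0]_1$ in the $L^1$–$\mathcal M$ pairing, so it suffices that $\mbox{tr}(By)=0$ for $y\in\mathfrak A^{\perp}$. From $\xi=Bg\in H^2$ and $(H^2)^{\perp}=H_0^2$ one gets $\mbox{tr}\big(B(gy)\big)=0$ for $y\in H_0^2$, i.e. $B\perp[gH_0^2]_1$; hence it is enough that $[\mathfrak A_0h_0]_1\subseteq[gH_0^2]_1$, which reduces to the identity $[g\mathfrak A_0]_2=H_0^2$. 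This last identity follows from $g$ being right outer, together with the column splitting $H^2=[\mathfrak Dh_0^{1/2}]_2\oplus^{col}H_0^2$ and the description of $\mathfrak A_0$ in terms of the generating partial isometries $U_n\in\mathfrak A_0$.

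The step I expect to be the real obstacle is making the inter-scale transport $k\mapsto kh_0^{-1/s}$ fully rigorous: its well-definedness, its membership in $H^p$, and especially the norm-matching needed for the $\delta$-estimate all require careful bookkeeping with the modular automorphism group, and one first has to know that $[\eta\mathfrak A]_p$ is of Beurling type for every $p$, not only $p=2$. In the last step the delicate point is the identity $[g\mathfrak A_0]_2=H_0^2$.
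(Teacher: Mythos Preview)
Your core idea for $p>2$ has a genuine gap that you yourself flag but underestimate. From $\eta h_0^{1/s}=Bk$ with $k\in H^2$ right outer, you set $h=kh_0^{-1/s}$. But for an \emph{arbitrary} right outer $k\in H^2$ there is no reason $kh_0^{-1/s}$ is even $\tau$-measurable, let alone in $L^p$; the approximation $k\approx h_0^{1/2}a$ with $a$ entire only gives $h_0^{1/p}\sigma^{\varphi}_{-i/s}(a)$ on a dense set, and convergence of this sequence in $L^p$ would require precisely the boundedness of the (generally unbounded) map $L^2\ni x\mapsto xh_0^{-1/s}\in L^p$. This also kills the norm estimate: nothing ties $\|k\|_2$ to $\|kh_0^{-1/s}\|_p$. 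The paper avoids this by not factoring $\eta h_0^{1/s}$ directly. Instead it factors the perturbed element
\[
x=(\eta^*\eta+\varepsilon^2 h_0^{2/p})^{1/2}h_0^{1/s}\in L^2(\mathcal M),
\]
so that the resulting outer $\xi=U^*x$ is \emph{by construction} of the form $hh_0^{1/s}$ with $h=U^*(\eta^*\eta+\varepsilon^2 h_0^{2/p})^{1/2}\in L^p$ explicitly, $\|h\|_p\to\|\eta\|_p$ as $\varepsilon\to0$. The $\varepsilon^2h_0^{2/p}$ term simultaneously forces $h_0^{1/2}$ to be dominated by $x$ (so the machinery of \cite[Theorem 3.5]{jig2} applies and gives a contraction $C$ with $\eta h_0^{1/s}=Cx$) and guarantees that the outer factor lives in the right $L^p$-scale. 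This is the missing idea.

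Your treatment of $1\le p<2$ is only a gesture; passing to the dual exponent $p'>2$ gives a \emph{left}-outer factorization of $\eta^*$, not a right-outer factorization of $\eta$, and ``transporting the Beurling structure'' in $L^p$ presupposes the very theorem you are proving. The paper handles this range by a two-step polar decomposition: write $\eta=\eta_2\eta_s$ with $\eta_2\in L^2$, $\eta_s\in L^s$ ($s\ge 2$), factor $\eta_s=Ch_s$ using the case already done, then factor $\eta_2C=Bh_2$ using $p=2$; the product $h=h_2h_s$ is right outer and the norms multiply correctly. Finally, for $\eta\in H^p\Rightarrow B\in\mathfrak A$, your annihilator computation works but is unnecessarily heavy: since $h$ is right outer, $\eta A=BhA\in H^p$ for all $A\in\mathfrak A$ gives $BH^p\subseteq H^p$, and then $B\in\mathfrak A$ by \cite[Theorem 2.7]{jig}.
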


 \begin{proof}   If $p=2$, then the  result is proved in \cite[Theorem 3.5]{jig2}. Suppose that   $2<p<\infty$ and we take some $t>2$ such that  $\frac12=\frac1p+\frac1t$.   Then $\eta h_0^{\frac1t}\in L^2(\mathcal M)$.

 Put $x=(\eta^*\eta +\varepsilon^2h_0^{\frac2p})^{\frac12}h_0^{\frac1t}$. Then we have that $x\in  L^2(\mathcal M)$ such that $\varepsilon^2h_0\leq x^*x $ and  $h_0^{\frac1t}\eta^*\eta h_0^{\frac1t}\leq x^*x $. By the proof of \cite[Theorem 3.5]{jig2},
      there are injective contractions $A, C\in\mathcal M$ with dense ranges, a unitary operator $U\in\mathcal M$ and  a right outer element  $\xi\in H^2$, such that $\varepsilon^{-1}Ax=h_0^{\frac12}$, $\eta h_0^{\frac1t}=Cx$ and  $x=U\xi$.
     Note that  $\xi=U^*x=U^*(\eta^*\eta +\varepsilon^2h_0^{\frac2p})^{\frac12}h_0^{\frac1t}$ is a right outer  element in $H^2$.
      Then  $H^2=[\xi\mathfrak A]_2=[U^*(\eta^*\eta +\varepsilon^2h_0^{\frac2p})^{\frac12}h_0^{\frac1t}\mathfrak A]_2=
      [U^*(\eta^*\eta +\varepsilon^2h_0^{\frac2p})^{\frac12}H^r]_2$. Put $ h=U^*(\eta^*\eta +\varepsilon^2h_0^{\frac2p})^{\frac12}$. We note that  $h\in H^p$. In fact, $H^2=[hh_0^{\frac1t}\mathfrak A]_2=[h\mathfrak A]_p H^t$   and $H_0^q=H^tH_0^2$. Thus $\mbox{tr}(h xy)=0$ for all $x\in H^t$ and $y\in H_0^2$, which implies that $h\in (H_0^q)^{\bot}=H^p$.
                Therefore $\mathfrak M=[h\mathfrak A]_p\subseteq H^p$ and  $\mathfrak MH^t=[\mathfrak Mh_0^{\frac1t}]_2=[h\mathfrak Ah_0^{\frac1t}]_2=[\xi\mathfrak A]_2=H^2$. By Lemma 2.2, $\mathfrak M=H^p$ and $h$ is right outer. Note that
        $\eta h_0^{\frac1t}=CU^*(\eta^*\eta +\varepsilon^2h_0^{\frac2p})^{\frac12}h_0^{\frac1t}=Chh_0^{\frac1t}$.   We then  have  $\eta=Ch$.  Note that $\|h\|_p= \|U^*(\eta^*\eta +\varepsilon^2h_0^{\frac2p})^{\frac12}\|_p=\left(\mbox{tr}((\eta^*\eta +\varepsilon^2h_0^{\frac2p})^{\frac{p}{2}})\right)^{\frac1p}\to \|\eta\|_p(\varepsilon\to0)$. We may choose  $\varepsilon>0$ so that  $\|h\|_p<\|\eta\|_p+\delta$.

        If $1\leq p<2$, then $\frac1p=\frac12+\frac1s$ for  some $s\geq2$. Let $\eta=V|\eta|=V|\eta|^{\frac{p}{2}}|\eta|^{\frac{p}{s}}=\eta_2\eta_s $, where $\eta_2=V|\eta|^{\frac{p}{2}}\in L^2(\mathcal M)$,
         $\eta_s=|\eta|^{\frac{p}{s}}\in L^s(\mathcal M)$ with   $\|\eta_2\|_2=\|\eta\|_p^{\frac{p}{2}}$ and $\|\eta_s\|_s=\|\eta\|_p^{\frac{p}{s}}$.
          We choose two positive numbers  $\delta_2,\delta_s$  such that $( \|\eta_2\|_2+\delta_2)(\|\eta_s\|_s+\delta_s)<\|\eta_2\|_2\|\eta_s\|_s+\delta=\|\eta\|_p+\delta$.
          By \cite[Theorem 3.5]{jig2} and the proof as above,  $\eta_s=Ch_s$ for a right outer
         element $h_s\in H^s$ with $\|h_s\|_s<\|\eta_s\|_s+\delta _s $ and a contraction $C\in\mathcal M$. Now $\eta_2C\in L^2(\mathcal M)$. We again have $\eta_2C=Bh_2$ for a right outer
         element $h_2\in H^2$  with $\|h_2\|_2<\|\eta_2\|_2+\delta_2$ and a contraction $B\in\mathcal M$. It is elementary that
         $h=h_2h_s\in H^p$ is right outer such  that $\|h\|_p\leq \|h_2\|_2\|h_s\|_s<\|\eta_2\|_2\|\eta_s\|_s+\delta=\|\eta\|_p+\delta$.

 If  $\eta \in H^p$, then $Bh A=\eta A\in H^p$ for all $A\in \mathfrak A$. Thus $B H^p\subseteq H^p$ and  $B\in \mathfrak A$  from \cite[Theorem 2.7]{jig}.
  \end{proof}

 We now have the  following Riesz type factorization    theorem in noncommutative $H^p$ spaces for $1\leq p<\infty$ whose proof  is similar to \cite[Theorem 3.6]{jig2}. To completeness, we give the detail.

  \begin{thm}Let $1\leq r,p,q<\infty$ such that  $\frac1r=\frac1p+\frac1q$ and let $\mathfrak A$ be a type 1 subdiagonal algebra.   Then for any $h\in H^r$ and  $\varepsilon>0$,
  there exist $h_p\in H^p$, $h_q\in H^q$  with $\|h_t\|_t<\|h\|_r^{\frac{r}{t}}+\varepsilon$  for $t=p,q$   such that $h=h_ph_q$.
  If $h\in H_0^r$, we may choose  one of $h_t$ in $H_0^t$ for $t=p,q$. \end{thm}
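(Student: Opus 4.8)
The plan is to deduce the factorization from Theorem 2.3 by a polar-type splitting, along the lines of \cite[Theorem 3.6]{jig2} with $h_0^{\frac1q}$ and $H_0^{r'}$ playing the roles that $h_0^{\frac12}$ and $\mathfrak A_0$ play there. We may assume $h\neq0$. Writing $h=V|h|$ for the polar decomposition and using $\frac1r=\frac1p+\frac1q$, split $|h|=|h|^{r/p}\,|h|^{r/q}$, so $h=ab$ with $a=V|h|^{r/p}\in L^p(\mathcal M)$, $\|a\|_p=\|h\|_r^{r/p}$, and $b=|h|^{r/q}\in L^q(\mathcal M)$, $\|b\|_q=\|h\|_r^{r/q}$. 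Applying Theorem 2.3 to $b$ yields a contraction $C\in\mathcal M$ and a right outer $h_q\in H^q$ with $\|h_q\|_q<\|h\|_r^{r/q}+\delta$ and $b=Ch_q$. Put $h_p:=aC\in L^p(\mathcal M)$; then $h_ph_q=a(Ch_q)=ab=h$ and $\|h_p\|_p\le\|a\|_p=\|h\|_r^{r/p}$, so once I verify $h_p\in H^p$, the first assertion follows by choosing $\delta<\varepsilon$. (Note $\frac1p+\frac1q=\frac1r\le1$ with $p,q<\infty$ forces $1<p,q<\infty$.)

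The crucial step is $h_p\in H^p$. Since $h_q$ is right outer, $[h_q\mathfrak A]_q=H^q$, and left multiplication by $h_p$ is bounded from $L^q(\mathcal M)$ into $L^r(\mathcal M)$ (noncommutative H\"older inequality), so $h_pH^q=h_p[h_q\mathfrak A]_q\subseteq[h_ph_q\mathfrak A]_r=[h\mathfrak A]_r\subseteq H^r$. In particular $h_ph_0^{\frac1q}\in H^r=(H_0^{r'})^{\bot}$, where $\frac1{r'}=1-\frac1r$ and annihilators are taken in the pairing $\langle x,y\rangle=\mbox{tr}(xy)$ (the needed $H^s$/$H_0^s$ annihilator identities, $1\le s<\infty$, are in \cite{jig,jig1}). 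Hence $\mbox{tr}(h_p\,h_0^{\frac1q}w)=0$ for all $w\in H_0^{r'}$, i.e.\ $h_p$ annihilates $h_0^{\frac1q}H_0^{r'}\subseteq L^{p'}(\mathcal M)$ (note $\frac1q+\frac1{r'}=\frac1{p'}$). Using the $\theta$-independence of $H_0^{p'}=[h_0^{\theta/p'}\mathfrak A_0h_0^{(1-\theta)/p'}]_{p'}$, I would compute $[h_0^{\frac1q}H_0^{r'}]_{p'}=[h_0^{\frac1q}h_0^{\frac1{r'}}\mathfrak A_0]_{p'}=[h_0^{\frac1{p'}}\mathfrak A_0]_{p'}=H_0^{p'}$. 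As $w\mapsto\mbox{tr}(h_pw)$ is $\|\cdot\|_{p'}$-continuous, $h_p$ then annihilates all of $H_0^{p'}$, that is, $h_p\in(H_0^{p'})^{\bot}=H^p$.

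For the last clause, if $h\in H_0^r$ I would run the same construction. Since $H_0^r$ is a closed right $\mathfrak A$-submodule of $L^r(\mathcal M)$, now $h_pH^q\subseteq[h\mathfrak A]_r\subseteq H_0^r$, hence $h_ph_0^{\frac1q}\in H_0^r=(H^{r'})^{\bot}$, so $h_p$ annihilates $h_0^{\frac1q}H^{r'}$. By the same $\theta$-independence (again $\frac1q+\frac1{r'}=\frac1{p'}$, both summands nonnegative) one gets $[h_0^{\frac1q}H^{r'}]_{p'}=[h_0^{\frac1q}\mathfrak Ah_0^{\frac1{r'}}]_{p'}=H^{p'}$, whence $h_p\in(H^{p'})^{\bot}=H_0^p$. (Symmetrically, pulling out a left outer $H^p$-factor first gives the variant $h_q\in H_0^q$.)

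I expect the only real obstacle to be the step $h_p\in H^p$, that is, upgrading $h_ph_0^{\frac1q}\in H^r$ to $h_p\in H^p$. It goes through because the right outerness of $h_q$ lets one replace the single vector $h_0^{\frac1q}$ by all of $H^q$, while the freedom of $\theta$ in $[h_0^{\theta/p}\mathfrak Ah_0^{(1-\theta)/p}]_p$ lets one remove the factor $h_0^{\frac1q}$ at the level of annihilators. The norm estimates are then immediate and the whole $\varepsilon$ is absorbed into Theorem 2.3.
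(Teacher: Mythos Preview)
Your proof is correct and follows essentially the same route as the paper: polar decomposition $h=V|h|^{r/p}|h|^{r/q}$, Theorem 2.3 applied to the right factor to produce a right outer $h_q\in H^q$ and a contraction, then an annihilator argument exploiting right outerness of $h_q$ to show the left factor lies in $H^p$. The only cosmetic difference is in that last step: the paper argues directly that $\mbox{tr}(h_ph_qg)=0$ for $g\in H_0^{r'}$ and uses $[h_qH_0^{r'}]_{p'}=H^qH_0^{r'}=H_0^{p'}$, whereas you specialize to the single vector $h_0^{1/q}\in H^q$ and then invoke $\theta$-independence to get $[h_0^{1/q}H_0^{r'}]_{p'}=H_0^{p'}$; these are equivalent (your route is a slight detour through a special case of the same identity). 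For the $H_0$ clause with $h_q\in H_0^q$, the paper makes the symmetry explicit by passing to $h^*\in (H_0^r)^*$ and applying the already-proved case to the type~1 subdiagonal algebra $\mathfrak A^*$; your ``left outer $H^p$-factor first'' is the same manoeuvre.
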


   \begin{proof} Take  $1<s\leq \infty$  such that  $\frac1r +\frac1s=1$.
   Let $h=V|h|=V|h|^{\frac{r}p}|h|^{\frac{r}q}$ be the polar decomposition. Then $|h|^{\frac{r}t}\in  L^t(\mathcal M)$   with $\||h|^{\frac{r}t}\|_t=\|h\|_r^{\frac{r}t}$  for $t=p,q$.
    By Theorem 2.3, there exist a contraction $B\in \mathcal M$ and a right  outer element $h_q\in H^q$  with $\|h_q\|_q<\||h|^{\frac{r}{q}}\|_q+\varepsilon=\|h\|_r^{\frac{r}{q}}+\varepsilon$ such that $|h|^{\frac{r}q}=Bh_q$. Put $h_p=V|h|^{\frac{r}{p}}B$. We show that $h_p\in H^p$.
   In fact, $tr(hg)=0$ for all $g\in H^s_0$ since $h\in H^r$. Then
   $tr(h g)=tr(h_ph_qg)=0$ for all  $g\in H^s_0$.  Note that $h_q$ is right outer. Then $[h_qH^s_0]_{p^{\prime}}=[h_q\mathfrak A H^s_0]_{p^{\prime}}=H^qH^{s}_0=H^{p^{\prime}}_0$, where $p^{\prime}>0$ such that  $\frac1p+\frac1{p^{\prime}}=1$.
    Therefore  $h_p\in H^p$ with    $\|h_p\|_p\leq \||h|^{\frac{r}p}\|_p< \|h\|_r^{\frac{r}p}+\varepsilon$. Moreover, if $h\in H_0^p$, then $tr(hg)=0$ for all $g\in H^s$. It easily follows that
   $h_p\in H_0^p$  in this case.

   On the other hand,  by taking   the pair $(h^*, (H^r)^*)$ if $h\in (H_0^r)$, we may write $h^*$ in the form $h^*=g_qg_p$ with $g_q\in (H_0^q)^*$ and $g_p\in (H^p)^*$. Then   we have $h=h_ph_q$ with $h_p\in H^p$ and  $h_q\in H_0^q$   by setting $h_q=g_q^*$ and $h_p=g_p^*$. \end{proof}

\section{Beurling type invariant subspace theorem for type 1 subdiagonal algebras}

Let $\mathfrak A$ be  a type 1 subdiagonal algebra.
For any  positive integer $n\geq 1$, let  $\mathfrak A_0^n$ be the  $\sigma$-weakly closed ideal of $\mathfrak A$  generated by $\{a_1a_2\cdots a_n:a_j\in\mathfrak A_0\}$.  It is elementary that a right invariant  subspace $\mathfrak M\subseteq L^2(\mathcal M)$ is of type 1(resp. type 2) if and only if $\cap_{n\geq 1}[\mathfrak M\mathfrak A_0^n]_2=\{0\}$(resp. $\mathfrak M=[\mathfrak M\mathfrak A_0]_2$). Based on this fact, we introduce similar notions in $L^p$ spaces.
 \begin{defn} Let $\mathfrak A$ be  a type 1 subdiagonal algebra and  $\mathfrak M$  a right(resp. left) invariant subspace in $L^p(\mathcal M)$. Then we say that $\mathfrak M$  is of type 1 if $\cap_{n\geq 1}[\mathfrak M\mathfrak A_0^n]_p=\{0\}$(resp. $\cap_{n\geq 1}[\mathfrak A_0^n\mathfrak M]_p=\{0\}$) and of type 2 if $\mathfrak M=[\mathfrak M\mathfrak A_0]_p$(resp. $\mathfrak M=[\mathfrak A_0\mathfrak M]_p$).
 \end{defn}
     We recall that there exists a column orthogonal family of  partial isometries $\{U_n:n\geq1\}$ in $\mathcal M$  such that   \begin{equation}
 H_0^2 = \oplus_{n\geq 1}^{col} U_n  H^2
 \end{equation} when $\mathfrak A$ is of type 1.

 We note that a Beurling  type invariant subspace theorem  is given in \cite{jig3} when  $p=1$ or $p=\infty$.
We now  consider this theorem  in noncommutative $L^p(\mathcal M)$ for $1< p< \infty$.
\begin{lem} Let $\mathfrak A$ be a type 1 subdiagonal algebra.  If   $\mathfrak M \subseteq  L^p(\mathcal M)$ ($1<p<\infty$) is a right invariant subspace of type 2, then there exists a projection $E
\in\mathcal M$ such that $\mathfrak M=E L^p(\mathcal M)$.\end{lem}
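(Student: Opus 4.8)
The plan is to show that a type $2$ right invariant subspace of $L^p(\mathcal M)$ is automatically a right $\mathcal M$-module, and then to read off the conclusion from the structure of such modules. The structural fact is general: if $\mathfrak N\subseteq L^p(\mathcal M)$ is any norm-closed subspace with $\mathfrak N\mathcal M\subseteq\mathfrak N$, then $\mathfrak N=EL^p(\mathcal M)$, where $E$ is the supremum of the range (left support) projections $l(x)$, $x\in\mathfrak N$. Indeed, from the polar decomposition $x=v|x|$ and functional calculus on $|x|\in L^p(\mathcal M)_{+}$ one gets $[x\mathcal M]_p=l(x)L^p(\mathcal M)$, so $\mathfrak N\subseteq EL^p(\mathcal M)$ is immediate from $x=l(x)x$; for the reverse inclusion write $E$ as the supremum of a countable subfamily $l(x_1),l(x_2),\dots$ (possible since $\mathcal M$ is $\sigma$-finite) and note that $(l(x_1)\vee\cdots\vee l(x_m))L^p(\mathcal M)=[(l(x_1)+\cdots+l(x_m))\,\mathcal M h_0^{1/p}]_p\subseteq\mathfrak N$, so letting $m\to\infty$ gives $EL^p(\mathcal M)\subseteq\mathfrak N$. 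Thus the problem reduces to proving $\mathfrak M\mathcal M\subseteq\mathfrak M$. Now $\mathfrak M$, being a norm-closed subspace of $L^p(\mathcal M)$ with $1<p<\infty$, is weakly closed, and right multiplication by a net converging $\sigma$-weakly in $\mathcal M$ converges weakly in $L^p(\mathcal M)$; since $\mathfrak A+\mathfrak A^*$ is $\sigma$-weakly dense in $\mathcal M$, $\mathfrak A^*=\mathfrak D\oplus\mathfrak A_0^{*}$, $\mathfrak M\mathfrak D\subseteq\mathfrak M$, and $\mathfrak A^*$ is generated as a $\sigma$-weakly closed algebra by $\mathfrak D$ and $\{U_n^{*}\}$, it suffices to prove
\[
\mathfrak M U_n^{*}\subseteq\mathfrak M\qquad(n\ge 1).
\]

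To attack this I would pass to a smaller exponent where the assertion is already available. Fix $1\le q<p$, let $s$ be defined by $\frac1p+\frac1s=\frac1q$, and set $\mathfrak K:=[\mathfrak M h_0^{1/s}]_q\subseteq L^q(\mathcal M)$. Using $\mathfrak M=[\mathfrak M\mathfrak A_0]_p$, the identities $\overline{\mathfrak A_0h_0^{1/s}}^{\,L^s}=\overline{h_0^{1/s}\mathfrak A_0}^{\,L^s}=H_0^s$ and $\overline{\mathfrak A_0H^s}^{\,L^s}=\overline{H^s\mathfrak A_0}^{\,L^s}=H_0^s$, and the continuity of the multiplication $L^p(\mathcal M)\times L^s(\mathcal M)\to L^q(\mathcal M)$, one checks that $\mathfrak K$ is a right invariant subspace of $L^q(\mathcal M)$ with $[\mathfrak K\mathfrak A]_q=\mathfrak K$ and $[\mathfrak K\mathfrak A_0]_q=[\mathfrak M H_0^s]_q=\mathfrak K$, i.e.\ $\mathfrak K$ is of type $2$. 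Taking $q=2$ when $p>2$ and $q=1$ when $1<p<2$ — cases in which the Lemma is already known, from the $L^2$-theory of \cite{ble4,lab} and from \cite{jig3} respectively — we obtain $\mathfrak K=EL^q(\mathcal M)$ for a projection $E\in\mathcal M$, so in particular $\mathfrak K$ is a right $\mathcal M$-module. Since $xh_0^{1/s}\in EL^q(\mathcal M)$ forces $(I-E)x\,h_0^{1/s}=0$, hence $(I-E)x=0$ because $h_0^{1/s}$ has dense range, this gives $\mathfrak M\subseteq EL^p(\mathcal M)$.

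The reverse inclusion $EL^p(\mathcal M)\subseteq\mathfrak M$, equivalently $\mathfrak M U_n^{*}\subseteq\mathfrak M$, is the genuine obstacle: the reduction to $L^q$ does not suffice by itself, since the density of $\mathfrak M h_0^{1/s}$ in $EL^q(\mathcal M)$ does not pull back to density of $\mathfrak M$ in $EL^p(\mathcal M)$ under right multiplication by $h_0^{1/s}$, so one has to work inside $L^p(\mathcal M)$. The additional leverage is that $\mathfrak M$ is of type $2$ at every level, $\mathfrak M=[\mathfrak M\mathfrak A_0^{\,n}]_p$ for all $n$, together with the column-orthogonality of $\{U_n\}$ and of its iterates $U_{k_1}\cdots U_{k_n}$ (pairwise orthogonal range projections, and $U_n^{*}U_m=\delta_{nm}U_n^{*}U_n\in\mathfrak D$). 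The idea is to expand $x\in\mathfrak M$ through these iterated decompositions, multiply on the right by $U_n^{*}$, use the orthogonality of the ranges so that only the matching blocks survive, and then invoke the module property of $\mathfrak K$ established above to re-enter $\mathfrak M$; with $\mathfrak M\subseteq EL^p(\mathcal M)$ this yields $\mathfrak M U_n^{*}\subseteq\mathfrak M$, hence $\mathfrak M\mathcal M\subseteq\mathfrak M$, hence $\mathfrak M=EL^p(\mathcal M)$ by the first step. I expect this last argument to be where the real work lies — it is the only point at which the type $2$ hypothesis is used beyond the single identity $\mathfrak M=[\mathfrak M\mathfrak A_0]_p$, and it requires careful bookkeeping of the partial-isometry relations among the $U_n$'s.
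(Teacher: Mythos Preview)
Your overall strategy is right: reduce to $\mathfrak M U_n^{*}\subseteq\mathfrak M$, deduce that $\mathfrak M$ is a right $\mathcal M$-module, and read off the projection $E$. But the proposal is incomplete at exactly the point you flag, and the detour through $L^q$ is unnecessary --- the missing ingredient is much smaller than you suspect.

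The fact you are not using is that for the generating family $\{U_n\}$ of a type 1 subdiagonal algebra one has $U_nU_m^{*}\in\mathfrak D$ for \emph{all} $n,m$, not only for $n=m$; this is \cite[Proposition 2.3]{jig2}. Combine it with the single-level identity $\mathfrak M=\vee\{\mathfrak M U_n:n\ge1\}$, which follows from $\mathfrak M=[\mathfrak M\mathfrak A_0]_p$, the description $\mathfrak A_0=\vee\{\mathfrak D\,U_{i_1}\cdots U_{i_k}\}$ from \cite[Theorem 2.7]{jig2}, and the observation $\mathfrak M U_{i_1}\cdots U_{i_k}\subseteq\mathfrak M U_{i_k}$ (since $U_{i_1}\cdots U_{i_{k-1}}\in\mathfrak A$). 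Now approximate $x\in\mathfrak M$ by finite sums $\sum_j y_jU_{n_j}$ with $y_j\in\mathfrak M$; then $xU_m^{*}$ is approximated by $\sum_j y_j\,(U_{n_j}U_m^{*})\in\mathfrak M\mathfrak D\subseteq\mathfrak M$, so $\mathfrak M U_m^{*}\subseteq\mathfrak M$ directly. This is precisely the paper's argument: no iterated expansion and no passage to $L^q$ is required, and the module conclusion then comes from \cite[Theorem 3.6 and Corollary 2.6]{js}.

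Two further remarks. First, your heuristic ``orthogonality of the ranges so that only the matching blocks survive'' is not justified: column orthogonality $U_n^{*}U_m=0$ for $n\neq m$ does \emph{not} force $U_nU_m^{*}=0$, so the off-diagonal terms $y_jU_{n_j}U_m^{*}$ need not vanish --- they simply lie in $\mathfrak M\mathfrak D$, which is just as good. Second, the $L^q$ reduction you set up yields only $\mathfrak M\subseteq EL^p(\mathcal M)$, and as you correctly observed this inclusion cannot be reversed from $L^q$-density alone; the paper sidesteps this entirely by never leaving $L^p$.
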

\begin{proof}
By \cite[Theorem 2.7]{jig2},
$\mathfrak A_0=\vee\{\mathfrak DU_{i_1}U_{i_2}\cdots U_{i_n}:i_j\geq 1\}$, where $\{U_i:i\geq 1\}$ is a family of column orthogonal partial isometries as in $(3.1)$.
 Note that $\mathfrak M$ is  $\mathfrak D$ right  invariant  and   $[\mathfrak M\mathfrak A_0]_p=\mathfrak M$.  Since
 $\vee\{\mathfrak MU_n:n\geq1\}\supseteq \vee\{\mathfrak MU_{i_1}U_{i_2}\cdots U_{i_n}:i_k\geq 1,n\geq1\}$, $\mathfrak M=\vee\{\mathfrak M U_n:n\geq1\}$.  For any $m,n \geq 1$ and $x\in \mathfrak M$, we have $R_{U_m^*}R_{U_n}x=xU_nU_m^*\in \mathfrak M$ since $U_nU_m^*\in\mathfrak D$ from \cite[Proposition 2.3]{jig2}.   Then $\mathfrak M$ is right $\mathcal M$ invariant. That is, $\mathfrak M$ is an $\mathcal M$ right module.  Then  by  \cite[Theorem 3.6]{js},
 there is a projection $E\in\mathcal M$ such that $\mathfrak M=EL^p(\mathcal M)$. In fact, we also have that the right submodule  $\mathfrak M$ is column summand in  $L^p(\mathcal M)$ from the proof of this theorem(cf.\cite[$p_{19}$]{js}). It follows that  the projection onto $\mathfrak M$ commutes with the right multiplications  by  $\mathcal M$. Thus there is  a projection $E\in\mathcal M$ such that $\mathfrak M=EL^p(\mathcal M)$ by \cite[Corollary 2.6]{js}.
\end{proof}

 For a family of column orthogonal partial isometries
  $\mathcal W=\{W_n:n\geq 1\}$, we define a right invariant subspace
  $\mathfrak M_{\mathcal W}^p=\oplus_{n\geq 1}^{col}W_n H^p$ in $ L^p(\mathcal M)$ for any $1\leq p\leq \infty$  as in \cite{jig3}.
If $1\leq r<\infty$ and $\frac1r=\frac1p+\frac1q$ for some  $p, q\geq 1$, then
    $\mathfrak M_{\mathcal W}^r=[(\oplus_{n\geq 1}W_n H^p) H^q]_r=\mathfrak M_{\mathcal W}^p H^q$.  The following theorem is motivated from \cite[Theorem 2.2]{jig3}.
\begin{thm}
Let   $\mathfrak A$ be a type 1 subdiagonal algebra and let  $\mathfrak M\subseteq  L^p(\mathcal M)$$(1< p< \infty)$ be a nonzero closed right invariant subspace of $\mathfrak A$.

$(i)$  There  are   right invariant subspaces $\mathfrak M_i$ of type $i$ $ (i=1,2)$ such that $\mathfrak M=\mathfrak M_1\oplus^{col}\mathfrak M_2$.

$(ii)$  If  $\mathfrak M\subseteq L^p(\mathcal M)$ is a right invariant subspace of type 1, then there exists a family of column orthogonal partial isometries $\mathcal W$ such that $\mathfrak M=\mathfrak M_{\mathcal W}^p$.
\end{thm}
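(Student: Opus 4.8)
The plan is to reduce the theorem to the already-known $L^2$ Beurling theorem for type $1$ subdiagonal algebras (Blecher--Labuschagne, Labuschagne, together with the decomposition $(3.1)$), transporting right invariant subspaces of $L^p(\mathcal M)$ into $L^2(\mathcal M)$ by multiplying on the right by a power of $h_0$, and using the Riesz factorizations of Section $2$ together with the independence of $H^r=[h_0^{\theta/r}\mathfrak Ah_0^{(1-\theta)/r}]_r$ from $\theta\in[0,1]$. I would first treat $2<p<\infty$: fix $s$ with $\frac1p+\frac1s=\frac12$ (so $2\le s<\infty$) and, for a closed right invariant $\mathfrak M\subseteq L^p(\mathcal M)$, put $\mathfrak N:=[\mathfrak Mh_0^{\frac1s}]_2=[\mathfrak MH^s]_2$. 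Since $h_0^{\frac1s}A\in H^s$ for $A\in\mathfrak A$, one checks $\mathfrak N\mathfrak A\subseteq\mathfrak N$, so $\mathfrak N$ is right invariant in $L^2(\mathcal M)$; and the $\theta$-independence, applied to $[h_0^{\alpha}\mathfrak A_0^nh_0^{\beta}]$, gives $[\mathfrak N\mathfrak A_0^n]_2=[[\mathfrak M\mathfrak A_0^n]_ph_0^{\frac1s}]_2$, so $\mathfrak M$ is of type $1$ (resp. type $2$) exactly when $\mathfrak N$ is. The case $1<p<2$ I would then obtain by duality: with $\frac1p+\frac1q=1$ (so $2<q<\infty$), the annihilator $\mathfrak M^{\perp}\subseteq L^q(\mathcal M)$ is a closed left invariant subspace for $\mathfrak A$, equivalently a closed right invariant subspace for the type $1$ subdiagonal algebra $\mathfrak A^{*}$, to which the case already done applies; reflexivity of $L^p(\mathcal M)$ gives $\mathfrak M={}^{\perp}(\mathfrak M^{\perp})$, so the row decomposition of $\mathfrak M^{\perp}$ dualizes to the desired column decomposition of $\mathfrak M$.

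Granting this transport, both parts should follow. By the $L^2$ theory, $\mathfrak N=\mathfrak N_1\oplus^{col}\mathfrak N_2$ with $\mathfrak N_2=\bigcap_n[\mathfrak N\mathfrak A_0^n]_2=FL^2(\mathcal M)$ for a projection $F\in\mathcal M$, and $\mathfrak N_1=\oplus_n^{col}W_nH^2$ for a column orthogonal family $\mathcal W=\{W_n\}$. Pulling these back along $x\mapsto xh_0^{\frac1s}$, the subspace $\mathfrak M_2:=\bigcap_n[\mathfrak M\mathfrak A_0^n]_p$ is right invariant of type $2$, hence $\mathfrak M_2=EL^p(\mathcal M)$ for a projection $E\in\mathcal M$ by Lemma $3.2$; then $EL^p(\mathcal M)\subseteq\mathfrak M$ forces $(I-E)\mathfrak M\subseteq\mathfrak M$, and $\mathfrak M_1:=(I-E)\mathfrak M$ is closed, right invariant, of type $1$ (because $\bigcap_n[\mathfrak M_1\mathfrak A_0^n]_p\subseteq EL^p(\mathcal M)\cap(I-E)L^p(\mathcal M)=\{0\}$), with $\mathfrak M=\mathfrak M_1\oplus^{col}\mathfrak M_2$ since $x^{*}y=0$ whenever $x\in EL^p(\mathcal M)$ and $y\in(I-E)L^p(\mathcal M)$; this gives $(i)$. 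For $(ii)$, if $\mathfrak M$ is of type $1$ then $E=0$, and the Riesz factorization $H^2=[H^ph_0^{\frac1s}]_2$ (Theorem $2.4$) yields $\oplus_n^{col}W_nH^2=\mathfrak N=[(\oplus_n^{col}W_nH^p)h_0^{\frac1s}]_2$; injectivity of the transport then gives $\mathfrak M=\oplus_n^{col}W_nH^p=\mathfrak M_{\mathcal W}^p$.

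The hard part will be establishing the transport, i.e. that $\mathfrak L\mapsto[\mathfrak Lh_0^{\frac1s}]_2$ is injective on closed right invariant subspaces: $x\in L^p(\mathcal M)$ with $xh_0^{\frac1s}\in[\mathfrak Lh_0^{\frac1s}]_2$ must lie in $\mathfrak L$. For the type $1$ case this can be pushed through directly: if $xh_0^{\frac1s}\in\oplus_n^{col}W_nH^2$, then $Px=x$ for the projection $P=\sum_nW_nW_n^{*}\in\mathcal M$ (right multiplication by $h_0^{\frac1s}$ is injective, and $P_Nx\to Px$ in $L^p(\mathcal M)$), so $x=\sum_nW_n(W_n^{*}x)$ with $(W_n^{*}x)h_0^{\frac1s}=W_n^{*}(xh_0^{\frac1s})\in E_nH^2$, $E_n=W_n^{*}W_n\in\mathfrak D$; and one reduces to the elementary fact that $w\in L^p(\mathcal M)$ with $wh_0^{\frac1s}\in H^2$ belongs to $H^p$, which follows from $(H^p)^{\perp}=H_0^{p'}=[h_0^{\frac1s}H_0^2]_{p'}$ (where $\frac1{p'}=1-\frac1p=\frac1s+\frac12$) and the trace identity $\mbox{tr}((wh_0^{\frac1s})g)=\mbox{tr}(w(h_0^{\frac1s}g))$. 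The parallel verification on the $L^q$-side for $1<p<2$, the identification $\mathfrak M_2=\bigcap_n[\mathfrak M\mathfrak A_0^n]_p$, and the continuity $P_Nx\to Px$ are the remaining technical points; none of these should require machinery beyond Theorems $2.3$--$2.4$, Lemmas $2.1$ and $3.2$, and the parameter-independence of the Haagerup $H^p$-construction.
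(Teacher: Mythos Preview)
Your argument for $(i)$ matches the paper's. For $(ii)$, however, your plan---do $2<p<\infty$ first by transport to $L^2$, then $1<p<2$ by duality---has a genuine circularity. When $2<p<\infty$ you correctly obtain $[\mathfrak Mh_0^{1/s}]_2=\oplus_n W_nH^2$ from the $L^2$ theory, and your argument that $xh_0^{1/s}\in\oplus_n W_nH^2$ forces $x\in\oplus_n W_nH^p$ is fine: it gives $\mathfrak M\subseteq\mathfrak M_{\mathcal W}^p$. But the reverse inclusion $\mathfrak M_{\mathcal W}^p\subseteq\mathfrak M$ is missing. Your ``injectivity of the transport'' would require knowing that for an \emph{arbitrary} type $1$ right invariant $\mathfrak L\subseteq L^p$, the condition $xh_0^{1/s}\in[\mathfrak Lh_0^{1/s}]_2$ implies $x\in\mathfrak L$; your argument only establishes this when $\mathfrak L$ already has the form $\mathfrak M_{\mathcal W}^p$, which is precisely the conclusion you are after. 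There is no evident way to deduce $W_nH^p\subseteq\mathfrak M$ from $W_nH^2\subseteq[\mathfrak Mh_0^{1/s}]_2$ alone, since $L^2$-approximation of $W_nh_0^{1/2}$ by elements $m_kh_0^{1/s}$ gives no $L^p$-control of the $m_k$.

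The paper resolves this by reversing your order. It first treats $1<p<2$ (where $\frac1p=\frac12+\frac1r$) by a direct Zorn's-lemma construction: for each nonzero $h\in\mathfrak M$ one uses Theorem $2.3$ to factor $h=h_2h_r$ with $h_r\in H^r$ right outer, so that $[h\mathfrak A]_p=[h_2\mathfrak A]_2H^r=(VH^2)H^r$ for a single partial isometry $V$; a maximal column-orthogonal family $\mathcal W$ with $\mathfrak M_{\mathcal W}^2H^r\subseteq\mathfrak M$ is then shown to satisfy $\mathfrak M_{\mathcal W}^2H^r=\mathfrak M$. Only afterward, for $2<p<\infty$, does the paper use your transport $\mathfrak MH^r\subseteq L^2$ to obtain $\mathfrak M\subseteq\mathfrak M_{\mathcal W}^p$, and it closes the remaining inclusion by applying the just-proved $1<q<2$ case to the left invariant annihilator $\mathfrak M^{\perp}\subseteq L^q$: writing $\mathfrak M^{\perp}=H^r\mathfrak N$ for a left invariant $\mathfrak N\subseteq L^2$, one identifies $\mathfrak N=(\mathfrak MH^r)^{\perp}=(\mathfrak M_{\mathcal W}^pH^r)^{\perp}$ and hence $\mathfrak M^{\perp}=(\mathfrak M_{\mathcal W}^p)^{\perp}$. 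The Zorn argument in the low-exponent range is therefore not optional; it supplies exactly the inclusion your transport cannot.
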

\begin{proof}
$(i)$ Put $\mathfrak M_2=\cap_{n\geq 1}[\mathfrak M\mathfrak A_0^n]_p$. Then $\mathfrak M_2\subseteq \mathfrak M$ is a right invariant subspace of type 2. By Lemma  3.1,
 there is a projection $E\in\mathcal M$, such that $\mathfrak M_2=EL^p(\mathcal M)\subseteq \mathfrak M$. Put $\mathfrak M_1=(I-E)\mathfrak M\subseteq \mathfrak M$. This mean that $\mathfrak M_1$ is  closed and right invariant
 such that $\mathfrak M=\mathfrak M_1\oplus^{col}\mathfrak M_2$. Note that  $\cap_{n\geq 1}[\mathfrak M_1\mathfrak A_0^n]_p\subseteq \cap_{n\geq 1}[\mathfrak M\mathfrak A_0^n]_p\subseteq\mathfrak M_1\cap\mathfrak M_2=\{0\}$. Then  $\mathfrak M_1$ is of type 1.

$(ii)$ Firstly, we assume that $1< p<2$ and take $r>2$  with $\frac1p=\frac12+\frac1r$.
Put \begin{align*}\mathcal P=&  \{\mathcal W=\{W_n\}_{n\geq 1}\subseteq\mathcal M:
 \mbox{column orthogonal }\\ &\mbox{ partial isometries such that }
  \mathfrak M_{\mathcal W}^2 H^r\subseteq \mathfrak M  \}.\end{align*}
 Note that $\mathcal P$ is non-empty.
 In fact, we show that  for  any nonzero  $h\in \mathfrak M$, $[h\mathfrak A]_p=(VH^2)H^r$ for  a partial isometry $V\in\mathcal M$  with $V^*V\in \mathfrak D$. Write $h=(u|h|^{\frac{p}2})|h|^{\frac{p}r}$. Then  $|h|^{\frac{p}r}\in L^r(\mathcal M)$ by
  Theorem 2.3,    there exist a contraction $B\in \mathcal M$ and  a right  outer element $h_r\in  H^r$ such that  $|h|^{\frac{p}r}=Bh_r$.  Put $h_2=u|h|^{\frac{p}2}B$. We have $h=h_2h_r$. This implies that
   $$\mathfrak M\supseteq [h\mathfrak A]_p=[h_2h_r\mathfrak A]_p=[h_2H^r]_p=[h_2\mathfrak Ah_0^{\frac1r}\mathfrak A]_p=[h_2\mathfrak A]_2 H^r$$ since $[h_r\mathfrak A]_r=H^r$.
    Note that $[h_2\mathfrak A]_2\subseteq  L^2(\mathcal M)$ is a right invariant subspace. By \cite[Lemma 3.3]{jig2},
      $[h_2\mathfrak A]_2=V H^2\oplus^{col} N_2$ for a partial isometry $V\in\mathcal M$ such that $V^*V\in\mathfrak D$
      and a  right invariant subspace $N_2$ of type 2.  Note that
      $$[ N_2H^r]_p=[[ N_2\mathfrak A_0]_2H^r]_p=[[ N_2H^r]_p\mathfrak A_0]_p.$$
      It follows that $[N_2H^r]_p=\cap_{n\geq 1}[(N_2H^r)\mathfrak A_0^n]_p\subseteq \mathfrak M=\{0\}$.
      Thus $N_2=\{0\}$ and $[h_2\mathfrak A]_2=VH^2$ with $V^*V\in \mathfrak D$.

       Note that $h_r$ is right outer. This means that  $[h\mathfrak A]_p=[h_2h_r\mathfrak A]_p=[h_2H^r]_p=[h_2\mathfrak AH^r]_p=[h_2\mathfrak A]_2H^r= (VH^2)H^r$ and  $\{V\}\in\mathcal P$.

 We define a partial order in $\mathcal P$ by $\mathcal W\leq \mathcal V$ if $\mathfrak M_{\mathcal W}^2\subseteq \mathfrak M_{\mathcal V}^2$ for any $\mathcal W, \mathcal V\in \mathcal P$. Let $\{\mathcal W_{\lambda}:\lambda \in\Lambda\}\subseteq \mathcal P$ be a totally ordered family in $\mathcal P$. Put $\mathfrak N=\vee\{\mathfrak M_{\mathcal W_{\lambda}}^2: \lambda\in\Lambda\}$. Note that $\mathfrak N\subseteq  L^2(\mathcal M)$ is a right invariant subspace in $L^2(\mathcal M)$. Then  $\mathfrak N=\mathfrak M_{\mathcal W}^2\oplus^c \mathfrak N_2$ for a family of column orthogonal partial isometries $\mathcal W$ and a right invariant subspace $\mathfrak N_2$ of type 2 in $L^2(\mathcal M)$.  Then $[\mathfrak N_2 H^r]_p$ is also a type 2 right invariant subspace in $ L^p(\mathcal M)$.
 This implies that $[\mathfrak N_2 H^r]_p=\cap_{n\geq 1} [(\mathfrak N_2 H^r)\mathfrak A_0^n]_p
  \subseteq \cap_{n\geq 1}[\mathfrak M\mathfrak A_0^n]_p=\{0\}$.   Hence $\mathfrak N_2=\{0\}$.
 Since $\mathfrak M_{\mathcal W_{\lambda}}^2 H^r\subseteq \mathfrak M$, $\mathfrak N H^r\subseteq \mathfrak M$. Thus
  $\mathcal W\in\mathcal P$ with $\mathcal W_{\lambda}\leq \mathcal W$ for any $\lambda\in \Lambda$. That is, $\mathcal W$ is an upper bound of $\{\mathcal W_{\lambda}:\lambda\in\Lambda\}$. By Zorn's lemma, there exists a maximal element $\mathcal W\in \mathcal P$. We show  that $\mathfrak M=\mathfrak M_{\mathcal W}^p$.
  Otherwise, assume that  there is an $h\in \mathfrak M$ such that $h\notin\mathfrak M_{\mathcal W}^p$.
   Then $[h\mathfrak A]_p=(VH^2)H^r$ for a partial isometry $V\in\mathcal M$ with $V^*V\in\mathfrak D$ as above.
     Since $h\notin \mathfrak M_{\mathcal W}^p$, $V H^2\nsubseteq\mathfrak M_{\mathcal W}^2$. Now
$\tilde{\mathfrak N}=\vee\{\mathfrak M_{\mathcal W}^2,V H^2\}$ is also a right invariant subspace in $ L^2(\mathcal M)$ with $\tilde{\mathfrak N} H^r\subseteq \mathfrak M$.  In this case, we have $\tilde{\mathfrak N}=\mathfrak M_{\mathcal V}^2$ for a family of  column orthogonal partial isometries $\mathcal V\in\mathcal P$. It is trivial  that $\mathfrak M_{\mathcal W}^2\subsetneqq\mathfrak M_{\mathcal V}^2$ by a similar treatment. This is a contradiction. Thus
$\mathfrak M=\mathfrak M_{\mathcal W}^2H^r=\mathfrak M_{\mathcal W}^p$.

  We have  that  for a general right invariant subspace $\mathfrak M\subseteq L^p(\mathcal M)$, there is a right invariant subspace $\mathfrak N\subseteq L^2(\mathcal M)$ such that $\mathfrak M=\mathfrak NH^r$.
  In fact,  by $(i)$, $\mathfrak M=\mathfrak M_1\oplus^{col}\mathfrak M_2=\mathfrak M_{\mathcal W}^2H^r\oplus^{col}EL^2(\mathcal M)H^r=
  (\mathfrak M_{\mathcal W}^2\oplus^{col} EL^2(\mathcal M))H^r=\mathfrak NH^r$, where $\mathfrak N=\mathfrak M_{\mathcal W}^2\oplus^{col}EL^2(\mathcal M)$.

 Let $2<p< \infty$ and let $p$ and $q$ be conjugate exponents, that is, $\frac1p+\frac1q=1$. Then $1< q<2$ and  $\frac12=\frac1p+\frac1r$ as well as $\frac1q=\frac12+\frac1r$ for some  $1<r<\infty$. Note that  $\mathfrak M H^r\subseteq L^2(\mathcal M)$ is right invariant of type 1.
 Thus there is a family of column orthogonal partial isometries $\mathcal W=\{W_n:n\geq 1\}$ such that
 $\mathfrak MH^r=\mathfrak M_{\mathcal W}^2=\mathfrak M_{\mathcal W}^pH^r$.

 We next show that $\mathfrak M=\mathfrak M_{\mathcal W}^p$. For any $x\in\mathfrak M$, it is elementary
 that $xh_0^{\frac1r}=\oplus^{col}_{n\geq 1}W_nW_n^*xh_0^{\frac1r}$ with $W_n^*xh_0^{\frac1r}\in H^2$  since $W_n^*xh_0^{\frac1r}\in H^p$. Then $W_n^*x\in H^p$ and
 $x=\oplus^{col}_{n\geq 1}W_nW_n^*x\in \mathfrak M_{\mathcal W}^p$. Therefore $\mathfrak M\subseteq \mathfrak M_{\mathcal W}^p$.

 For any closed subspace $K\subseteq L^p(\mathcal M)$, we put $K^{\bot}=\{y\in L^q(\mathcal M):tr(xy)=0,\forall x\in K\}$.
 It is known that $K^{\bot}$ is  left invariant when $K$ is right invariant. By symmetry, we easily have
 $\mathfrak M^{\bot}=H^r\mathfrak N=\oplus_{n\geq 1}^{row} H^qV_n\oplus ^{row} L^q(\mathcal M)F$  for a left invariant subspace $\mathfrak N=\oplus_{n\geq 1}^{row} H^2V_n\oplus ^{row} L^2(\mathcal M)F\subseteq L^2(\mathcal M)$ since $1< q<2$ by preceding proof.

  We claim that $(\mathfrak MH^r)^{\bot}=\mathfrak N$. It is clear that $(\mathfrak MH^r)^{\bot}\supseteq \mathfrak N$. Take any $y\in(\mathfrak MH^r)^{\bot}$.  Then $\mbox{tr}(xhy)=0$ for all $x\in \mathfrak M$ and  $h\in H^r$. It follows that   $H^ry\subseteq \mathfrak M^{\bot}=\oplus_{n\geq 1}^{row} H^qV_n\oplus ^{row} L^q(\mathcal M)F$.
 By a similar treatment as above, we have
 $h_0^{\frac1r}y=\oplus_{n\geq 1}^{row}h_0^{\frac1r}yV_n^*V_n \oplus^{row}h_0^{\frac1r}yF$ and thus $y=\oplus_{n\geq 1}^{row}yV_n^*V_n \oplus^{row}yF\in \mathfrak N$.

 Thus $\mathfrak M^{\bot}=H^r\mathfrak N=H^r(\mathfrak MH^r)^{\bot}=H^r(\mathfrak M_{\mathcal W}^pH^r)^{\bot}=(\mathfrak M_{\mathcal W}^p)^{\bot} $
 and $\mathfrak M=\mathfrak M_{\mathcal W}^p$.
\end{proof}

 Let $\mathfrak M\subseteq L^p(\mathcal M)$ be a right invariant subspace. Then $\mathfrak M=\mathfrak M_{\mathcal W}^p\oplus^{col}EL^p(\mathcal M)$ for a family of column orthogonal partial isometries $\mathcal W$  and a projection $E$ in $\mathcal M$. It follows that $\mathfrak M_{\infty}=\mathfrak M_{\mathcal W}^{\infty}\oplus E\mathcal M$ is a $\sigma$-weakly closed right invariant subspace in $\mathcal M$.

\begin{cor}
Let   $\mathfrak A$ be a type 1 subdiagonal algebra. Then there is a lattice isomorphism from the lattice of all  right invariant subspaces of $\mathfrak A$ in $\mathcal M$ onto that in $L^p(\mathcal M)$ for $1\leq p<\infty$.
\end{cor}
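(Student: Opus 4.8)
The plan is to realize the isomorphism concretely by right multiplication with $h_0^{\frac1p}$. For a $\sigma$-weakly closed right invariant subspace $\mathfrak N$ of $\mathfrak A$ in $\mathcal M$ put $\Psi(\mathfrak N)=[\mathfrak N h_0^{\frac1p}]_p\subseteq L^p(\mathcal M)$, and for a closed right invariant subspace $\mathfrak M\subseteq L^p(\mathcal M)$ put $\Psi'(\mathfrak M)=\{A\in\mathcal M:Ah_0^{\frac1p}\in\mathfrak M\}$. First I would check that both assignments land in the correct lattice. Using the two descriptions $[h_0^{\frac1p}\mathfrak A]_p=[\mathfrak A h_0^{\frac1p}]_p=H^p$ (the cases $\theta=1$ and $\theta=0$ of the formula for $H^p$), for $n\in\mathfrak N$ and $B\in\mathfrak A$ one approximates $h_0^{\frac1p}B$ in $L^p$ by sums $\sum_iC_ih_0^{\frac1p}$ with $C_i\in\mathfrak A$, so that $nh_0^{\frac1p}B\in[\mathfrak N h_0^{\frac1p}]_p$; hence $\Psi(\mathfrak N)$ is right invariant. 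On the other side, $\Psi'(\mathfrak M)=R_{h_0^{\frac1p}}^{-1}(\mathfrak M)$ is $\sigma$-weakly closed, since $A\mapsto Ah_0^{\frac1p}$ is $\sigma$-weak to weak-$L^p$ continuous on bounded sets (for $\frac1p+\frac1q=1$ and $y\in L^q(\mathcal M)$ the pairing element $h_0^{\frac1p}y$ lies in $L^1(\mathcal M)=\mathcal M_*$), and it is right invariant because $Ah_0^{\frac1p}\in\mathfrak M$ forces $AH^p=A[h_0^{\frac1p}\mathfrak A]_p\subseteq[\mathfrak M\mathfrak A]_p=\mathfrak M$.

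The second step is to feed in the two structure theorems. By the Beurling type invariant subspace theorem of \cite{jig3} for $p=\infty$, every such $\mathfrak N$ equals $\mathfrak M_{\mathcal W}^{\infty}\oplus E\mathcal M$ for a projection $E\in\mathcal M$ and a column orthogonal family $\mathcal W=\{W_n\}$ with $\sum_nW_nW_n^{*}\leq I-E$; then, left multiplications being bounded and $[\mathcal M h_0^{\frac1p}]_p=L^p(\mathcal M)$, $[\mathfrak A h_0^{\frac1p}]_p=H^p$, one computes
\[
\Psi(\mathfrak N)=[\mathfrak M_{\mathcal W}^{\infty}h_0^{\frac1p}]_p\oplus^{col}[E\mathcal M h_0^{\frac1p}]_p=\mathfrak M_{\mathcal W}^{p}\oplus^{col}EL^p(\mathcal M).
\]
Conversely, Theorem 3.3 (for $1<p<\infty$; the case $p=1$ is the $p=1$ Beurling type theorem of \cite{jig3}) writes every closed right invariant $\mathfrak M\subseteq L^p(\mathcal M)$ as $\mathfrak M_{\mathcal W}^{p}\oplus^{col}EL^p(\mathcal M)$, and the remark preceding this corollary shows that $\mathfrak M_{\mathcal W}^{\infty}\oplus E\mathcal M$ lies in the $\mathcal M$-lattice and is carried onto $\mathfrak M$ by $\Psi$. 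So $\Psi$ is surjective, and it will be bijective with inverse $\Psi'$ once we know $\Psi'(\mathfrak M)=\mathfrak M_{\mathcal W}^{\infty}\oplus E\mathcal M$.

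The crux is therefore the recovery formula $\mathfrak M_{\mathcal W}^{\infty}=\{A\in\mathcal M:Ah_0^{\frac1p}\in\mathfrak M_{\mathcal W}^{p}\}$, the summand $E\mathcal M$ being dealt with by the trivial equivalence $Ah_0^{\frac1p}\in EL^p(\mathcal M)\Leftrightarrow A\in E\mathcal M$. Given $A$ in the right-hand side, write $Ah_0^{\frac1p}=\oplus_{n}^{col}W_nW_n^{*}Ah_0^{\frac1p}$ with each $W_n^{*}Ah_0^{\frac1p}\in H^p$; since $W_n^{*}A\,H^p=W_n^{*}A[h_0^{\frac1p}\mathfrak A]_p\subseteq[H^p\mathfrak A]_p=H^p$, \cite[Theorem 2.7]{jig} gives $W_n^{*}A\in\mathfrak A$, and because $Ah_0^{\frac1p}\in\mathfrak M_{\mathcal W}^{p}\subseteq\bigl(\sum_nW_nW_n^{*}\bigr)L^p(\mathcal M)$ while $h_0^{\frac1p}$ is injective, $A=\bigl(\sum_nW_nW_n^{*}\bigr)A=\sum_nW_n(W_n^{*}A)\in\mathfrak M_{\mathcal W}^{\infty}$; the reverse inclusion is immediate. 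With the recovery formula in hand, $\Psi'=\Psi^{-1}$, both $\Psi$ and $\Psi'$ are patently inclusion-preserving, so $\Psi$ is an order isomorphism, and an order isomorphism between lattices is automatically a lattice isomorphism.

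I expect the recovery formula to be the main obstacle: it is the only step that is not purely formal, requiring the outer-type characterization $BH^p\subseteq H^p\Rightarrow B\in\mathfrak A$ of \cite[Theorem 2.7]{jig} together with the column-orthogonality bookkeeping of $\mathcal W$, and it is what forces the use of both structure theorems (Theorem 3.3 here and the $p=\infty$ Beurling theorem of \cite{jig3}) rather than a soft argument. The endpoint $p=1$, outside the range of Theorem 3.3, must be handled separately through the $p=1$ Beurling type theorem of \cite{jig3}.
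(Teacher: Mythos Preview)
Your proposal is correct and follows essentially the same approach as the paper: define the map $\mathfrak N\mapsto[\mathfrak N h_0^{1/p}]_p$, invoke the Beurling-type structure theorems in $\mathcal M$ (from \cite{jig3}) and in $L^p(\mathcal M)$ (Theorem~3.3 here, and \cite{jig3} for $p=1$), and check bijectivity via the decomposition $Ah_0^{1/p}=\oplus_n W_nW_n^*Ah_0^{1/p}\oplus EAh_0^{1/p}$ together with $W_n^*A\in\mathfrak A$. You package the inverse explicitly as $\Psi'$ and justify $W_n^*A\in\mathfrak A$ via \cite[Theorem~2.7]{jig}, while the paper leaves that step implicit and argues injectivity directly, but the substance is identical.
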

\begin{proof}
Let $\mathfrak M \subseteq\mathcal M$ be a $\sigma$-weakly closed right invariant subspace. Then $\mathfrak M_p=[\mathfrak Mh_0^{\frac1p}]_p=\mathfrak MH^p\subseteq L^p(\mathcal M)$ is a right invariant subspace. We show this correspondence is a bijection. Assume that $\mathfrak MH^p=\mathfrak NH^p$ for a $\sigma$-weakly closed right invariant subspace $\mathfrak N\subseteq \mathcal M$. Then we have two column orthogonal families of partial isometries $\mathcal W$ and $\mathcal V$  and two projections $E,F\in\mathcal M$ such that
$ \mathfrak M=\mathfrak M_{\mathcal W}^{\infty}\oplus ^{col}E \mathcal M $ and $ \mathfrak N=\mathfrak M_{\mathcal V}^{\infty}\oplus ^{col}F \mathcal M $ by \cite[Theorem 2.2]{jig3}. For any $A\in\mathfrak M$, $Ah_0^{\frac1p}\in\mathfrak MH^p=\mathfrak NH^p$. Then $Ah_0^{\frac1p}=\oplus_{n\geq 1} V_nV_n^*Ah_0^{\frac1p}\oplus FAh_0^{\frac1p}$ with $V^*_nA\in\mathfrak A$ and $FAh_0^{\frac1p}\in FL^p(\mathcal M)$. It follows that $A=\oplus V_nV_n^*A+FA\in\mathfrak N$. That is, $\mathfrak M\subseteq \mathfrak N$. By symmetry, we have $\mathfrak M=\mathfrak N$. On the other hand, for any right invariant subspace $\mathfrak M_p\subseteq  L^p(\mathcal M)$, there is a family of column orthogonal partial isometries $\mathcal W $ and a projection $E\in\mathcal M$ such that $\mathfrak M_p=\mathfrak M_{\mathcal W}^p\oplus EL^p(\mathcal M)$ from Theorem 2.3. Put $\mathfrak M=\mathfrak M_{\mathcal W}^{\infty}\oplus^{col}E\mathcal M$. Then $\mathfrak M_p=\mathfrak MH^p$. Thus the map: $\mathfrak M\to \mathfrak MH^p$ is a bijection from the lattice of all $\sigma-$weakly closed right invariant subspaces in $\mathcal M$  onto that of all  right closed invariant subspaces in $L^p(\mathcal M)$.
\end{proof}

\section{Subalgebras containing a type 1 subdiagonal algebra}

We gave a necessary and sufficient condition for a type 1 subdiagonal algebra $\mathfrak A$ to be  a maximal subalgebra of  $\mathcal M$ in \cite{jig3}.  In fact, we may determine all $\sigma-$weakly closed subalgebras
  containing $\mathfrak A$ in  $\mathcal M$. Let $\mathcal B$ be a $\sigma-$weakly closed subalgebra of $\mathcal M$ containing $\mathfrak A$. Then $[\mathfrak Bh_0^{\frac12}]_2$ is a right invariant subspace in $L^2(\mathcal M)$.  As in \cite{jig3}, let $p$ and $q$ be the projections  from $L^2(\mathcal M)$ onto $L^2(\mathfrak D)$ and the wandering subspace $[\mathfrak Bh_0^{\frac12}]_2\ominus [\mathfrak Bh_0^{\frac12}\mathfrak A_0]_2$ of $[\mathfrak Bh_0^{\frac12}]_2$ respectively. It is known that both $p$ and $q$ are in the commutant  $(R(\mathfrak D))^{\prime}$ of $R(\mathfrak D)$. For any two projections $E$ and $F$ in a von Neumann algebra $\mathcal M$, $E\preccurlyeq F$ means that there is a partial isometry $V\in\mathcal M$ such that $V^*V=E$ and $VV^*\leq F$. If $E\preccurlyeq F$ and $F\preccurlyeq E$, then  $E\sim F$. For an operator $A$,  $R(A)$ denotes the range of $A$.

\begin{lem} Let $\mathfrak A$ be  a type 1 subdiagonal algebra and $\mathfrak B$ as above. Then  $q\leq p$.
\end{lem}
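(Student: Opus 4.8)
The plan is to prove the equivalent statement that the wandering subspace $W:=[\mathfrak B h_0^{\frac12}]_2\ominus[\mathfrak B h_0^{\frac12}\mathfrak A_0]_2$, which is the range of $q$, is contained in $L^2(\mathfrak D)$, the range of $p$. Write $\mathfrak N=[\mathfrak B h_0^{\frac12}]_2$. First I would record the elementary structural facts: since $\mathfrak B$ is an algebra containing $\mathfrak A$ and $h_0^{\frac12}\mathfrak A\subseteq H^2=[\mathfrak A h_0^{\frac12}]_2$, the space $\mathfrak N$ is simultaneously a right invariant subspace of $\mathfrak A$ and a left $\mathfrak B$-module, $H^2\subseteq\mathfrak N$, and $[\mathfrak N\mathfrak A_0]_2=[\mathfrak B h_0^{\frac12}\mathfrak A_0]_2\supseteq[H^2\mathfrak A_0]_2=H_0^2$. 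In particular every $\xi\in W$ satisfies $\xi\perp H_0^2$.

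Next I would invoke the orthogonal decomposition $L^2(\mathcal M)=H_0^2\oplus L^2(\mathfrak D)\oplus(H_0^2)^{*}$: the three summands are mutually orthogonal because $\varphi=\varphi\circ\Phi$ and $\Phi$ is multiplicative on $\mathfrak A$ (so e.g. $\langle a_0 h_0^{\frac12},d h_0^{\frac12}\rangle=\mbox{tr}(h_0 d^{*}a_0)=\varphi(d^{*}a_0)=\varphi(\Phi(d^{*}a_0))=0$ for $a_0\in\mathfrak A_0$, $d\in\mathfrak D$), while the $\sigma$-weak density of $\mathfrak A+\mathfrak A^{*}$ in $\mathcal M$ makes $H^2+(H^2)^{*}$ dense in $L^2(\mathcal M)$; this also yields $L^2(\mathfrak D)=H^2\ominus H_0^2$ and $(H^2)^{\perp}=(H_0^2)^{*}$. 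Since each $\xi\in W$ already satisfies $\xi\perp H_0^2$, it now suffices to prove $W\subseteq H^2$, for then $W\subseteq H^2\cap(H_0^2)^{\perp}=H^2\ominus H_0^2=L^2(\mathfrak D)$, which is exactly $q\le p$.

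The step $W\subseteq H^2$ is where the algebra structure of $\mathfrak B$ — rather than just its being an invariant subspace — enters, and I expect it to be the main obstacle. Here I would apply the $L^2$–Beurling theorem (Theorem 3.3 with $p=2$, or \cite[Theorem 2.2]{jig3} transported to $L^2$) to decompose $\mathfrak N=\bigl(\oplus^{col}_{n}W_n H^2\bigr)\oplus^{col}EL^2(\mathcal M)$ for a column orthogonal family $\{W_n\}\subseteq\mathcal M$ and a projection $E\in\mathcal M$; since the type-$2$ summand satisfies $[EL^2(\mathcal M)\mathfrak A_0]_2=EL^2(\mathcal M)$, one obtains $W=\oplus^{col}_{n}W_n(H^2\ominus H_0^2)=\oplus^{col}_{n}W_nL^2(\mathfrak D)$. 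Then, using that $\mathfrak N$, $[\mathfrak N\mathfrak A_0]_2$ and hence $W$ are invariant under left multiplication by $\mathfrak D$ (so $W$ is a left $\mathfrak D$-module), that $h_0^{\frac12}=I\cdot h_0^{\frac12}\in\mathfrak N$, and that $p,q\in(R(\mathfrak D))^{\prime}$, I would show that no element of $W$ can have a nonzero component in $(H_0^2)^{*}$: such a component would, through the algebra structure of $\mathfrak B$, produce a vector of $[\mathfrak B h_0^{\frac12}\mathfrak A_0]_2$ not orthogonal to $\xi$, contradicting $\xi\in W$. This forces $W\subseteq H^2$, and the inequality $q\le p$ follows at once. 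The surrounding manipulations — the orthogonal splitting of $L^2(\mathcal M)$, the $\mathfrak D$-module bookkeeping, and the Beurling decomposition of $\mathfrak N$ — are routine given the cited results, so the whole weight of the argument rests on this last exclusion of an anti-analytic component.
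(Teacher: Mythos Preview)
Your reduction is sound: since $W=R(q)$ is orthogonal to $H_0^2\subseteq[\mathfrak B h_0^{1/2}\mathfrak A_0]_2$ and $L^2(\mathcal M)=H_0^2\oplus L^2(\mathfrak D)\oplus(H_0^2)^*$, the inequality $q\le p$ is indeed equivalent to $W\subseteq H^2$. But your argument stops exactly where the real work begins. You assert that a nonzero $(H_0^2)^*$-component of $\xi\in W$ would, ``through the algebra structure of $\mathfrak B$'', yield an element of $[\mathfrak B h_0^{1/2}\mathfrak A_0]_2$ not orthogonal to $\xi$, yet you give no mechanism for this, and none is apparent: $\xi$ lives in $L^2$, not in $\mathfrak B$, so its anti-analytic part cannot simply be fed back into the algebra, and nothing you have written forces $[\mathfrak B h_0^{1/2}\mathfrak A_0]_2$ to contain vectors detecting a prescribed element of $(H_0^2)^*$. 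Knowing that $W=\oplus_n^{col}W_nL^2(\mathfrak D)$ only restates the problem as $W_n\in\mathfrak D$, which is precisely what Theorem~4.2 deduces \emph{from} the present lemma; the ingredients you list (left $\mathfrak D$-invariance, $p,q\in(R(\mathfrak D))'$, the Beurling decomposition) are all correct but do not close this gap. You yourself say ``the whole weight of the argument rests on this last exclusion'', and that weight is not carried.

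The paper's proof does not attempt a direct exclusion of the anti-analytic part. It first extracts from the algebra structure the crucial fact that each Beurling generator satisfies $W_n\in\mathfrak B\cap\mathfrak B^*$ (the point being $W_n^*\mathfrak B=W_n^*W_n\mathfrak A\subseteq\mathfrak A\subseteq\mathfrak B$), and then runs a Murray--von Neumann comparison of $p$ and $q$ inside $(R(\mathfrak D))'$: whenever subprojections $p_1\le p$ and $q_1\le q$ are equivalent there, one builds via \cite[Proposition~2.5]{jig3} a partial isometry $W$ with $W^*H^2=[R(q_1)\mathfrak A]_2$, shows $W,W^*\in\mathfrak B$ using the $W_n$'s, and from $WW^*[\mathfrak B h_0^{1/2}]_2=[R(p_1)\mathfrak A]_2$ concludes $p_1\le q$. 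An exhaustion by a maximal orthogonal family then forces either $p\le q$ or $q\le p$, and the strict case $p<q$ is eliminated because $[R(p)\mathcal M]_2=L^2(\mathcal M)$ leaves no room for $[R(q-p)\mathcal M]_2$. This comparison-theoretic machinery is what replaces the step you left blank.
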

\begin{proof}
 Note that $\mathfrak B$ is two-side $\mathfrak A$ invariant subspace. Then  there exists a family of column orthogonal  partial isometries $\mathcal W$ and a projection $E $ in $\mathcal M$ such that $\mathfrak B=\oplus_{n\geq 1}^{col}W_n \mathfrak A\oplus ^{col}E \mathcal M $ as well as  $[\mathfrak B h_{0}^{\frac12}]_2=\oplus_{n\geq 1}^{col}W_n H^2\oplus ^{col}E L^2(\mathcal M) $ by Corollary 3.1. This means that   $E L^2(\mathcal M)=\cap_{n\geq 1}[\mathfrak B h_{0}^{\frac12}\mathfrak A_0^n]_2$ . Since $[\mathfrak B h_{0}^{\frac12}\mathfrak A_0^n]_2$ is left $\mathfrak B$ invariant for any $n$,  $E L^2(\mathcal M)$ is also left $\mathfrak B$ invariant. This implies that $EBE=EB$  for any $B\in\mathfrak B$. In particular, $ED=DE$ for all $D\in \mathfrak B \cap \mathfrak B^*$. We now have $W_{n}\in \mathfrak B \cap \mathfrak B^*$  for any $n\geq 1$. In fact, it is trivial that $W_{n}\in \mathfrak B$ for any $n\geq1$. On the other hand, $W_{n}^*\mathfrak B=W_{n}^*W_{n}\mathfrak A \subseteq \mathfrak A \subseteq \mathfrak B$ since $W_{n}^*W_{n}\in \mathfrak D$. Then $W_{n}^*\in \mathfrak B$ for any $n\geq1$. Since $E W_{n}=0$ for any $n$, $E W_{n}=W_{n}E=0$ and $\oplus_{n\geq 1}W_nW_{n}^*=I-E$. Put $\mathfrak N=\oplus_{n\geq 1}^{col}W_n H^2\subseteq[\mathfrak B h_{0}^{\frac12}]_2$.  Then $\mathfrak N$ is the type 1 part of the  right invariant subspace  $[\mathfrak Bh_0^{\frac12}]_2$.

Take any subprojections $p_1\leq p$ and  $q_1 \leq q$    such that $p_1\sim q_1$  in $( R(\mathfrak D))'$. We begin by proving that $p_1\leq q$. Put $\mathfrak M_1=[R(p_1)\mathfrak A]$ and $\mathfrak N_1=[R(q_1)\mathfrak A]$. Let $w_1 \in ( R(\mathfrak D))'$ be a partial isometry such that $w_1^*w_1=q_1$ and $w_1w_1^*=p_1\leq p$ as in \cite[Proposition 2.5]{jig3}. Then $\mathfrak N_1=W^*H^2$ for a partial isometry $W\in \mathcal M$ such that $H^2=W\mathfrak N_1\oplus^{col} (I-WW^*)H^2$ by \cite[Proposition 2.5]{jig3} again. It follows that $\mathfrak N_1=W^*H^2=W^*W\mathfrak N_1$. Note that $\mathfrak N=[\left([\mathfrak B h_{0}^{\frac12}]_2\ominus[\mathfrak B h_{0}^{\frac12}\mathfrak A_0]_2\right)\mathfrak A]_2=[R(q)\mathfrak A]_2$. We have $\mathfrak N=[R(q_1)\mathfrak A]_2\oplus^{col}[(R(q-q_1))\mathfrak A]_2=\mathfrak N_1\oplus^{col}[(R(q-q_1))\mathfrak A]_2$. We similarly obtain that $H^2=[R(p_1)\mathfrak A]_2\oplus^{col}[(R(p-p_1))\mathfrak A]_2=\mathfrak M_1\oplus^{col}[(R(p-p_1))\mathfrak A]_2$. It follows that $W^*H^2=W^*\mathfrak M_1=\mathfrak N_1$. Then we have $\mathfrak M_1=W\mathfrak N_1$ since $W^*\mathfrak M_1=W^*H^2=W^*W\mathfrak N_1$. We also get $WW^*H^2=WW^*\mathfrak M_1=\mathfrak M_1\subseteq H^2$ and so that $WW^*\in \mathfrak D$. Note that $[\mathfrak B h_{0}^{\frac12}]_2=[R(q)\mathfrak A]_2\oplus ^{col}E L^2(\mathcal M) $. We have $[R(q)\mathcal M]_2=(I-E)L^2(\mathcal M)$ since $\oplus_{n\geq 1}^{col}W_nW_{n}^*=I-E$. It is elementary that $W$ is an isometry on $[R(q_1)\mathcal M]_2$ while  $Wx=0$ for any $x\in [R(q)\mathcal M]_2^\bot$. It follows that $W(E L^2(\mathcal M))=0$ and we in turn have $WE=0$. Thus $W[\mathfrak B h_{0}^{\frac11}]_2=W[R(q)\mathfrak A]_2=W[R(q_1)\mathfrak A]_2=W\mathfrak N_1=\mathfrak M_1\subseteq H^2\subseteq [\mathfrak B h_{0}^{\frac12}]_2$. We now have $W\in \mathfrak B$ by \cite[Proposition 2.3]{jig3}.

 On the other hand, $W^*H^2=W^*\mathfrak M_1=\mathfrak N_1\subseteq \mathfrak N=\oplus_{n\geq 1}^{col}W_nH^2$. Then    $W^*h_{0}^{\frac12}=\oplus_{n\geq 1}^{col}W_n\xi_n$  for some  $\xi_n \in H^2$. It follows that $W_n^*W^*h_{0}^{\frac12}=W_n^*W_n\xi_n\in H^2$, which implies that $W_n^*W^*\in \mathfrak A\subseteq \mathfrak B$. Moreover, $W_nW_n^*W^*h_{0}^{\frac12}=W_n\xi_n$. So we get $W^*h_{0}^{\frac12}=\oplus_{n\geq 1}^{col}W_n\xi_n=\oplus_{n\geq 1}^{col}W_nW_n^*W^*h_{0}^{\frac12}$. Then $W^*=\oplus_{n\geq 1}^{col}W_nW_n^*W^*$ since $h_{0}^{\frac12}$ is separating. We now have $W^*\in \mathfrak B$. It follows that $\mathfrak M_1=WW^*\mathfrak M_1=WW^*H^2\subseteq WW^*[\mathfrak B h_{0}^{\frac12}]\subseteq W[\mathfrak B h_{0}^{\frac12}]_2=W\mathfrak N_1=\mathfrak M_1$. Thus $WW^*H^2= WW^*[\mathfrak B h_{0}^{\frac12}]_2=\mathfrak M_1$  with the  wandering subspace $R(p_1)$.  Take any $\xi\in R(p_1)$.   We have
  $$\langle \xi,B h_{0}^{\frac12} A\rangle=\langle WW^*\xi,B h_{0}^{\frac12} A\rangle =\langle \xi,WW^*B h_{0}^{\frac12}A\rangle=0$$ for any $ B\in \mathfrak B$ and $A\in\mathfrak A_0$. Moreover, we have $\xi\in R(p_1)\subseteq R(p)=L^2(\mathfrak D)\subseteq H^2\subseteq [\mathfrak B h_{0}^{\frac12}]_2$.
  It follows that $R(p_1) \subseteq [\mathfrak Bh_0^{\frac12}]_2  \ominus [\mathfrak Bh_0^{\frac12}\mathfrak A_0]_2=R(q)$.
         Therefore
          $p_1\leq q$.

We take a maximal family of mutually orthogonal subprojections $\{p_n:n\geq1\}$ of $p$ such that  $p_n\sim q_n$ for  some subprojections $q_n\leq q$ for all $n\geq1$  in $(R(\mathfrak D))^{\prime}$ by Zorn's lemma. Then we must have $p_n\leq q$ for all $n\geq 1$ as above. Put $p_0=p-(\oplus_{n\geq 1}p_n)$ and $q_0=q-(\oplus_{n\geq 1}p_n)$, then $p_0$ and $q_0$ are in $(\mathcal R(\mathfrak D))'$ and either  $p_0$ or $q_0$ is $0$ by \cite[Theorem 6.2.7]{kad}.

 If $q_0=0$,  then we  have $q\leq p$. If $p_0=0$, then we  obtain that $p\leq q$. Next we show that $p=q$. Note that $\mathfrak N=[R(q)\mathfrak A]_2=[R(p)\mathfrak A]_2\oplus^{col}[(R(q-p))\mathfrak A]_2=H^2\oplus^{col}[(R(q-p))\mathfrak A]_2$. It trivial that $L^2(\mathcal M)=[R(p)\mathcal M]_2$ is orthogonal with $[(R(q-p))\mathcal M]_2$. Then we obtain that $[(R(q-p))\mathcal M]_2=0$ and  $p=q$. Consequently, we have $q\leq p$.
\end{proof}

\begin{thm}
Let $\mathfrak A$ be a type 1 subdiagonal algebra  and  $\mathfrak B$  a $\sigma$-weakly closed subalgebra containing $\mathfrak A$  of $\mathcal M$. Then the following assertions hold:

$(1)$ There exists a projection $E\in Lat_{\mathcal M}\mathfrak A \cap \mathfrak D$ such that $\mathfrak B=E\mathcal M + (I-E)\mathfrak A$.

$(2)$ $\mathfrak B$ is a type 1 subdiagonal algebra with respect to the unique faithful normal expectation $\Psi$ from $\mathcal M$ onto the diagonal $\mathfrak B\cap\mathfrak B^*$ of $\mathfrak B$ such that $\varphi\circ\Psi=\varphi$.
\end{thm}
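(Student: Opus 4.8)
The plan is to exploit the lattice-isomorphism machinery of Section 3 to pin down the internal structure of $\mathfrak B$, and then verify the subdiagonal axioms directly. First I would observe that $\mathfrak B$ is a two-sided $\mathfrak A$-module (it contains $\mathfrak A$ and is an algebra), so in particular $[\mathfrak B h_0^{1/2}]_2$ is a right invariant subspace of $\mathfrak A$ in $L^2(\mathcal M)$; by Corollary 3.1 (via \cite[Theorem 2.2]{jig3}) there is a family of column orthogonal partial isometries $\mathcal W=\{W_n\}$ and a projection $E\in\mathcal M$ with $\mathfrak B=\oplus_{n\ge1}^{col}W_n\mathfrak A\oplus^{col}E\mathcal M$. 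As in the proof of Lemma 4.1, each $W_n$ lies in $\mathfrak B\cap\mathfrak B^*$ (since $W_n\in\mathfrak B$ trivially and $W_n^*\mathfrak B=W_n^*W_n\mathfrak A\subseteq\mathfrak A\subseteq\mathfrak B$ because $W_n^*W_n\in\mathfrak D$), so $E=I-\oplus_n W_nW_n^*$ and $E\mathcal M$ is a two-sided ideal; this forces $E\in Z(\mathfrak B)$, and in fact $E\in\mathfrak D$.

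The crux is to show that the type-1 part $\oplus_n W_n\mathfrak A$ of $\mathfrak B$ is already contained in $\mathfrak A$, i.e. that the wandering projection $q$ equals $p$ (in the notation before Lemma 4.1), so that the only $W_n$ that survive are partial isometries with range inside $\mathfrak D$ and $\oplus_n W_n\mathfrak A\subseteq(I-E)\mathfrak A$. This is exactly where Lemma 4.1 is used: it gives $q\le p$, and since always $p\le q$ (as $L^2(\mathfrak D)=[\mathfrak D h_0^{1/2}]_2\subseteq[\mathfrak B h_0^{1/2}]_2\ominus[\mathfrak B h_0^{1/2}\mathfrak A_0]_2$ because $\mathfrak D\cap\mathfrak A_0=\{0\}$), we get $p=q$. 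Hence $[\mathfrak B h_0^{1/2}]_2=H^2\oplus^{col}EL^2(\mathcal M)$, which back-translates through the lattice isomorphism to $\mathfrak B=\mathfrak A\oplus^{col}E\mathcal M=E\mathcal M+(I-E)\mathfrak A$. That $E\in Lat_{\mathcal M}\mathfrak A$ is immediate: $EL^2(\mathcal M)=\cap_{n\ge1}[\mathfrak B h_0^{1/2}\mathfrak A_0^n]_2$ is left $\mathfrak B$-invariant, hence left $\mathfrak A$-invariant, so $EAE=EA$, i.e. $(I-E)AE=0$ for all $A\in\mathfrak A$; combined with $E\in\mathfrak D$ this proves $(1)$.

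For $(2)$, with $E\in Lat_{\mathcal M}\mathfrak A\cap\mathfrak D$ fixed, write $\mathfrak B=E\mathcal M+(I-E)\mathfrak A$. Since $E\in\mathfrak D$ is central for $\mathfrak A$-module reasons and $(I-E)AE=0$, one computes directly that $\mathfrak B\cap\mathfrak B^*=E\mathcal M+(I-E)\mathfrak D=E\mathcal ME\oplus(I-E)\mathfrak D(I-E)$ (using $EXE$ for $X\in\mathcal M$; here $E\mathcal M$ is already self-adjoint as a two-sided ideal because $E$ is central). The required conditional expectation is $\Psi=E\cdot E+(I-E)\Phi(I-E)\,\cdot\,$, more precisely $\Psi(X)=EXE+(I-E)\Phi(X)(I-E)$; this is a faithful normal projection of norm one onto $\mathfrak B\cap\mathfrak B^*$, and $\varphi\circ\Psi=\varphi$ follows from $\varphi\circ\Phi=\varphi$ together with the fact that $E$ commutes with $h_0$ (as $E\in\mathfrak D$ and $\Phi$ is the $\varphi$-preserving expectation, $\sigma_t^\varphi$ fixes $\mathfrak D$). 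Uniqueness of such $\Psi$ is the standard Takesaki-type argument. Multiplicativity of $\Psi$ on $\mathfrak B$: on the $E\mathcal M$ summand $\Psi$ is the identity, on the $(I-E)\mathfrak A$ summand it is $(I-E)\Phi(\cdot)$ which is multiplicative since $\Phi$ is multiplicative on $\mathfrak A$, and the cross terms vanish because $(I-E)AE=0$. Density of $\mathfrak B+\mathfrak B^*$ in $\mathcal M$ follows from $\mathfrak A+\mathfrak A^*$ being $\sigma$-weakly dense, since $\mathfrak B\supseteq\mathfrak A$. Finally, to see $\mathfrak B$ is of type 1: a right invariant subspace of $\mathfrak B$ inside $H^2_{\mathfrak B}:=[\mathfrak B h_0^{1/2}]_2=H^2\oplus^{col}EL^2(\mathcal M)$ splits along $E$; the $EL^2(\mathcal M)$ piece is automatically type 1 in the appropriate sense (the $\mathfrak B_0$-invariant analysis collapses since $E$ is central and $E\Psi=E$, making that summand a full $E\mathcal M E$-module with trivial wandering-intersection exactly when it should), and on the $(I-E)H^2$ piece invariance under $\mathfrak B$ is the same as invariance under $\mathfrak A$, so type 1 of $\mathfrak A$ gives the Beurling decomposition. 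I expect the main obstacle to be the bookkeeping in $(2)$ around the diagonal and the expectation when $E\ne0$ — specifically checking that $\Psi$ as defined really lands in $\mathfrak B\cap\mathfrak B^*$ and is multiplicative on all of $\mathfrak B$ (not just on $\mathfrak A$), and verifying the type-1 property of $\mathfrak B$ on the $E$-summand, where the subdiagonal structure degenerates to a von Neumann algebra and one must argue the wandering-subspace intersection condition holds vacuously.
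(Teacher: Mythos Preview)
Your outline for part~(1) follows the paper's route through Lemma~4.1, but it contains a genuine error. You claim that ``always $p\le q$'' because $\mathfrak D\cap\mathfrak A_0=\{0\}$. That argument only yields $L^2(\mathfrak D)\perp[h_0^{1/2}\mathfrak A_0]_2=H_0^2$; it does \emph{not} give $L^2(\mathfrak D)\perp[\mathfrak Bh_0^{1/2}\mathfrak A_0]_2$. In fact the inequality $p\le q$ is false whenever $E\ne 0$: once one knows (as the theorem asserts) that $E\in\mathfrak D$, the vector $Eh_0^{1/2}$ lies in $L^2(\mathfrak D)$ but also in $EL^2(\mathcal M)=[EL^2(\mathcal M)\mathfrak A_0]_2\subseteq[\mathfrak Bh_0^{1/2}\mathfrak A_0]_2$, so $Eh_0^{1/2}\notin R(q)$. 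Correspondingly, your ``decomposition'' $[\mathfrak Bh_0^{1/2}]_2=H^2\oplus^{col}EL^2(\mathcal M)$ is not a column sum (the two pieces overlap on $EH^2$). The paper uses only $q\le p$ from Lemma~4.1: this gives $\mathfrak N=[R(q)\mathfrak A]_2\subseteq[R(p)\mathfrak A]_2=H^2$, hence $W_n\in\mathfrak A$; combined with $W_n^*\in\mathfrak B$ one gets $W_n\in\mathfrak D$, so $I-E=\sum_nW_nW_n^*\in\mathfrak D$ and $\oplus_nW_n\mathfrak A=(I-E)\mathfrak A$. No equality $p=q$ is needed (and none holds in general).

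For part~(2) your explicit $\Psi(X)=EXE+(I-E)\Phi(X)(I-E)$ is correct, but two points need repair. First, $E$ is not central in $\mathcal M$ (so $E\mathcal M$ is not self-adjoint); what one actually has, and what the paper uses, is $E\in Z(\mathfrak D)$, which follows from $E\in\mathfrak D\cap Lat_{\mathcal M}\mathfrak A$ and suffices to get $\sigma_t^{\varphi}(E)=E$. Second, and more importantly, your type~1 argument is not a proof: the claim that on the $E$-summand ``the wandering-subspace intersection condition holds vacuously'' is exactly the nontrivial point. The paper's device is to show $\mathfrak B_0=E\mathcal M(I-E)+(I-E)\mathfrak A_0\subseteq\mathfrak A_0$; the inclusion $E\mathcal M(I-E)\subseteq\mathfrak A_0$ uses $E\in Z(\mathfrak D)$ (so $\Phi(ET(I-E))=0$) together with $E\mathfrak A^*(I-E)=0$ to see $E\mathcal M(I-E)=E\mathfrak A(I-E)\subseteq\mathfrak A$. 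From $\mathfrak B_0\subseteq\mathfrak A_0$ one immediately gets $\cap_n[h_0^{1/2}\mathfrak B_0^n]_2\subseteq\cap_n[h_0^{1/2}\mathfrak A_0^n]_2=\{0\}$, which is the type~1 criterion. Your splitting-along-$E$ sketch does not capture this and, as written, would not close the argument.
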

\begin{proof}
$(1)$ As in the proof of Lemma 4.1, there exists a family of column orthogonal  partial isometries $\mathcal W$ and a projection $E $ in $\mathcal M$ such that $\mathfrak B=\oplus_{n\geq 1}^{col}W_n \mathfrak A\oplus ^{col}E \mathcal M $ and $[\mathfrak B h_{0}^{\frac12}]_2=\oplus_{n\geq 1}^{col}W_n H^2\oplus ^{col}E L^2(\mathcal M) $. Put $\mathfrak N=\oplus_{n\geq 1}^{col}W_n H^2\subseteq[\mathfrak B h_{0}^{\frac12}]_2$ be the type 1 part of $[\mathfrak Bh_0^{\frac12}]_2$. Let $p$ and $q$ as in Lemma 4.1. We   have $q\leq p$. This means that  $\mathfrak N =[R(q)\mathfrak A]_2\subseteq [R(p)\mathfrak A]_2=H^2$.   Thus we have $W_n \in \mathfrak A$ since $\mathfrak N=\oplus_{n\geq 1}^{col}W_n H^2\subseteq H^2$. Since $W_n^*\mathfrak B=W_n^*W_n\mathfrak A\subseteq \mathfrak A$, we get $W_n^* \in \mathfrak A$. Therefore $W_n, W_n^*\in \mathfrak D$  and $E\in \mathfrak D\subseteq \mathfrak A$ from the fact that $\oplus_{n\geq 1}^{col}W_nW_{n}^*=I-E\subseteq \mathfrak D$.
   Note that $E\in Lat_{\mathcal M}\mathfrak B\subseteq Lat_{\mathcal M}\mathfrak A $, $E\in \mathfrak D$ and   $\mathfrak B=\oplus_{n\geq 1}^{col}W_n \mathfrak A\oplus ^{col}E \mathcal M $. We have $(I-E)\mathfrak B=(I-E)(\oplus_{n\geq 1}^{col}W_n \mathfrak A)=\oplus_{n\geq 1}^{col}W_n \mathfrak A\subseteq (I-E)\mathfrak A \subseteq(I-E)\mathfrak B$ and so $ \oplus_{n\geq 1}^{col}W_n \mathfrak A = (I-E)\mathfrak A$.    Thus, $\mathfrak B=E\mathcal M + (I-E)\mathfrak A$.

 $(2)$ By (1), $\tilde{\mathfrak D}=\mathfrak B\cap \mathfrak B^*=E\mathcal ME + (I-E)\mathfrak D$. We next claim that $\mathfrak B$ is $\sigma_t^\varphi$-invariant. In fact, $E$ is in the center of $\mathfrak D$. Thus we have $\sigma_t^\varphi(E)=E$ for all $t\in\mathbb R$. It follows that  $\sigma_t^\varphi(\mathfrak B)= \mathfrak B$  and $\sigma_t^\varphi(\tilde{\mathfrak D})=\tilde{\mathfrak D}$  for all $t\in\mathbb R$.  By \cite[Chapter IX, Theorem 4.2]{tak}, there exists   unique  faithful normal conditional expectation $\Psi$ from $\mathcal M$ onto  $\tilde{\mathfrak D}$ such  that $\varphi\circ\Psi=\varphi$. We show that $\mathfrak B$ is a maximal subdiagonal algebra of $\mathcal M$ with respect to $\Psi$. In fact, $\mathfrak B_0=\{B\in\mathfrak B:\Psi(B)=0\}=E\mathcal M(I-E)+(I-E)\mathfrak A_0.$ An elementary calculation shows that $\mathfrak B_0$ is an ideal of $\mathfrak B$. We thus  have  that  $\Psi$ is multiplicative on $\mathfrak B$. It is maximal subdiagonal by \cite[Theorem 1.1]{xu}.

Finally we show that $\mathfrak B$ is of type 1. Note that $E$ is in the center of $\mathfrak D$, we have $\Phi(E  T(I-E))=0$ for all $T\in\mathcal M$. Moreover, $E(\mathfrak A+\mathfrak A^*)(I-E)=E\mathfrak A(I-E)\subseteq \mathfrak A$. We have $E\mathcal M(I-E)\subseteq \mathfrak A_0$. It follows that $\mathfrak B_0\subseteq \mathfrak A_0$. Consequently,
 $\cap_{n\geq 1}[h_{0}^{\frac12}\mathfrak B_0^n]_2\subseteq \cap_{n\geq 1}[h_{0}^{\frac12}\mathfrak A_0^n]_2=\{0\}$.
Therefore, $\mathfrak B$ is a type 1 subdiagonal algebra from \cite[Proposition 2.2]{jig2}.
\end{proof}
\section{The relative invariant subspace lattice of a type 1 subdiagonal algebra }
We recall that the  invariant subspace lattice of a subalgebra $\mathcal A\subseteq \mathcal \mathcal B(\mathcal H)$ is the set
$Lat\mathcal A=\{E\in\mathcal B(\mathcal H)_p:(I-E)AE=0, \forall A\in\mathcal A\}$.  If $\mathcal A\subseteq \mathcal M$, then
$Lat_{\mathcal M}\mathcal A=\{E\in\mathcal M_p: (I-E)AE=0, \forall A\in\mathcal A\}=Lat\mathcal A\cap\mathcal M_p$ is the relative invariant subspace lattice of $\mathcal A$ in $\mathcal M$.
  A subspace lattice is a nest if it is linearly ordered by inclusion.   Gilfeather and Larson in \cite{gil0} studied  the  relative invariant subspace lattices of  certain subalgebras in a von Neumann algebra
  and asked: If $\mathcal A$ is a
subalgebra in a factor von Neumann algebra $\mathcal M$ such that $\mathcal A+\mathcal A^*$ is dense in $\mathcal M$ in some topology for which
multiplication is separately continuous, is  $Lat_{\mathcal M}\mathcal A$ a
nest? Although this  question was negatively answered for the case in which $\mathcal M=\mathcal B(\mathcal H)$ in \cite{an},
 it is still interesting to consider those subalgebras with additional
properties. For example, how is the relative invariant subspace lattice of a maximal subdiagonal algebra?
 In \cite{jig0}, we shown that  the relative invariant subspace lattice of a  finite subdiagonal algebra is commutative.
 As an application of Theorem 4.2, we prove that  the relative invariant subspace lattice $Lat_{\mathcal M}\mathfrak A$ of a type 1 subdiagonal algebra  $\mathfrak A$ in $\mathcal M$ is commutative.

   We note that the centralizer $\mathcal M^\varphi$ of von Neumann algebra $\mathcal M$ associated with $\varphi$ is defined to be the set $\mathcal M^\varphi=\{A\in \mathcal M: \varphi(AB)=\varphi(BA),\forall B\in \mathcal M\}$(cf.\cite{kad,tak}). In fact,  $\mathcal M^\varphi$ is just  the fixed point algebra of $\mathcal M$ with respect to $\{\sigma_t^\varphi\}_{t\in \mathbb{R}}$: $\mathcal M^\varphi=\{A\in \mathcal M: \sigma_t^\varphi(A)=A,\forall t\in \mathbb R\}$. We remark that $\mathcal M^\varphi$ is a finite von Neumann algebra.

\begin{thm}
Let $\mathfrak A$ be a type 1 subdiagonal algebra of $\mathcal M$. Then the relative invariant subspace lattice $Lat_{\mathcal M}\mathfrak A$ of $\mathfrak A$ in $\mathcal M$ is commutative.
\end{thm}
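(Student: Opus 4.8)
The plan is to prove that $Lat_{\mathcal M}\mathfrak A$ consists entirely of projections fixed by the modular automorphism group $\{\sigma^{\varphi}_{t}\}$, i.e. that $Lat_{\mathcal M}\mathfrak A\subseteq\mathcal M^{\varphi}$, and then to transfer the problem to the finite von Neumann algebra $\mathcal M^{\varphi}$, where the commutativity of the corresponding lattice is already known from \cite{jig0}.

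First I would record the elementary fact that $Lat_{\mathcal M}\mathfrak A\cap\mathfrak D\subseteq Z(\mathfrak D)$ for any subdiagonal algebra: if $E$ is a projection in $Lat_{\mathcal M}\mathfrak A\cap\mathfrak D$ and $D\in\mathfrak D$, then $D,D^{*}\in\mathfrak D\subseteq\mathfrak A$, so $(I-E)DE=0$ and $(I-E)D^{*}E=0$; the first gives $DE=EDE$ and, on taking adjoints in the second, $ED=EDE$, whence $DE=ED$. In particular the projection $P$ furnished by Theorem 4.2 for a $\sigma$-weakly closed subalgebra containing $\mathfrak A$ lies in $Z(\mathfrak D)$, hence in $\mathcal M^{\varphi}$, because $\sigma^{\varphi}_{t}$ restricts on $\mathfrak D$ to the modular automorphism group of $\varphi|_{\mathfrak D}$ (using $\varphi\circ\Phi=\varphi$) and a modular group fixes the centre pointwise.

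Next, for a fixed projection $E\in Lat_{\mathcal M}\mathfrak A$ I would consider $\mathfrak B_{E}=\{T\in\mathcal M:(I-E)TE=0\}$. One checks directly that $\mathfrak B_{E}$ is a $\sigma$-weakly closed subalgebra of $\mathcal M$ containing $\mathfrak A$ (since $E\in Lat_{\mathcal M}\mathfrak A$) and containing $E$, so Theorem 4.2 gives $\mathfrak B_{E}=P\mathcal M+(I-P)\mathfrak A$ with $P\in Lat_{\mathcal M}\mathfrak A\cap\mathfrak D\subseteq Z(\mathfrak D)\subseteq\mathcal M^{\varphi}$. Since $(I-P)\mathfrak A P=0$, every $T\in\mathfrak B_{E}$ satisfies $(I-P)TP=0$; taking $T=E=E^{*}$ yields $EP=PE$. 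Writing $E=E_{1}+E_{2}$ with $E_{1}=PEP$ and $E_{2}=(I-P)E(I-P)$ (each a projection, as $E$ commutes with $P$), one has $E_{1}\in P\mathcal M P$, while $E_{2}\in(I-P)\mathfrak B_{E}(I-P)=(I-P)\mathfrak A(I-P)\subseteq\mathfrak A$ is self-adjoint, so $E_{2}\in\mathfrak A\cap\mathfrak A^{*}=\mathfrak D$ and $E_{2}\in(I-P)\mathfrak D$. Multiplying $(I-E)AE=0$ on both sides by $I-P$ and using $EP=PE$ gives $((I-P)-E_{2})\,(I-P)A(I-P)\,E_{2}=0$ for all $A\in\mathfrak A$, i.e. $E_{2}\in Lat_{(I-P)\mathcal M(I-P)}((I-P)\mathfrak A(I-P))\cap(I-P)\mathfrak D$, so the first step applied to the subdiagonal algebra $(I-P)\mathfrak A(I-P)$ gives $E_{2}\in Z((I-P)\mathfrak D)=(I-P)Z(\mathfrak D)\subseteq Z(\mathfrak D)\subseteq\mathcal M^{\varphi}$. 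For $E_{1}$: each $X\in P\mathcal M P$ lies in $\mathfrak B_{E}$, so $(I-E)XE=0$; expanding with $EP=PE$ and $PE_{2}=E_{2}P=0$ reduces this to $(P-E_{1})XE_{1}=0$ for all $X\in P\mathcal M P$, which says that $E_{1}\mathcal H$ is invariant --- hence, $P\mathcal M P$ being a von Neumann algebra, reducing --- for $P\mathcal M P$; combined with $E_{1}\in P\mathcal M P$ this forces $E_{1}\in Z(P\mathcal M P)$. Since $P\in\mathcal M^{\varphi}$, $\sigma^{\varphi}_{t}$ restricts on $P\mathcal M P$ to the modular automorphism group of $\varphi|_{P\mathcal M P}$, which fixes $Z(P\mathcal M P)$ pointwise, so $\sigma^{\varphi}_{t}(E_{1})=E_{1}$, and therefore $\sigma^{\varphi}_{t}(E)=E_{1}+E_{2}=E$. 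Thus $Lat_{\mathcal M}\mathfrak A\subseteq\mathcal M^{\varphi}$.

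Finally I would put $\mathfrak A^{\varphi}=\mathfrak A\cap\mathcal M^{\varphi}$ and $\mathfrak D^{\varphi}=\mathfrak D\cap\mathcal M^{\varphi}$. Since $\Phi$ commutes with $\sigma^{\varphi}_{t}$ (a consequence of $\varphi\circ\Phi=\varphi$), the restriction $\Phi|_{\mathcal M^{\varphi}}$ is a faithful normal conditional expectation of $\mathcal M^{\varphi}$ onto $\mathfrak D^{\varphi}$; and the $\varphi$-preserving normal conditional expectation $\mathcal E:\mathcal M\to\mathcal M^{\varphi}$ obtained from suitable averages of the modular flow carries $\mathfrak A$ into $\mathfrak A$, so applying $\mathcal E$ to a net in $\mathfrak A+\mathfrak A^{*}$ converging $\sigma$-weakly to an element of $\mathcal M^{\varphi}$ shows $\mathfrak A^{\varphi}+(\mathfrak A^{\varphi})^{*}$ is $\sigma$-weakly dense in $\mathcal M^{\varphi}$. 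Together with $\mathfrak A^{\varphi}\cap(\mathfrak A^{\varphi})^{*}=\mathfrak D^{\varphi}$, this makes $\mathfrak A^{\varphi}$ a subdiagonal algebra of $\mathcal M^{\varphi}$ with respect to $\Phi|_{\mathcal M^{\varphi}}$, and it is finite because the faithful normal trace $\varphi|_{\mathcal M^{\varphi}}$ on $\mathcal M^{\varphi}$ satisfies $\varphi|_{\mathcal M^{\varphi}}\circ\Phi|_{\mathcal M^{\varphi}}=\varphi|_{\mathcal M^{\varphi}}$. Every $E\in Lat_{\mathcal M}\mathfrak A$ is then a projection in $\mathcal M^{\varphi}$ with $(I-E)AE=0$ for all $A\in\mathfrak A^{\varphi}\subseteq\mathfrak A$, so $Lat_{\mathcal M}\mathfrak A\subseteq Lat_{\mathcal M^{\varphi}}\mathfrak A^{\varphi}$, which is commutative by \cite{jig0}; hence $Lat_{\mathcal M}\mathfrak A$ is commutative. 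The step I expect to be the main obstacle is the third paragraph: forcing each $E\in Lat_{\mathcal M}\mathfrak A$ through the structure theorem 4.2 for the auxiliary algebra $\mathfrak B_{E}$, splitting $E$ along the central (in $\mathfrak D$) projection $P$ produced there, and recognising each block as a central projection of a von Neumann algebra on which $\sigma^{\varphi}$ acts as a modular automorphism group.
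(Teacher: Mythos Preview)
Your proof is correct and follows the same global strategy as the paper: use Theorem~4.2 on an auxiliary $\sigma$-weakly closed subalgebra between $\mathfrak A$ and $\mathcal M$ to deduce that every $E\in Lat_{\mathcal M}\mathfrak A$ lies in the centraliser $\mathcal M^{\varphi}$, and then reduce to the finite subdiagonal algebra $\mathfrak A\cap\mathcal M^{\varphi}$ and invoke \cite{jig0}. Your final paragraph reproduces precisely what the paper obtains by citing \cite[Corollary~2.5]{jio}.

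The difference lies in how $E\in\mathcal M^{\varphi}$ is established. You apply Theorem~4.2(1) to $\mathfrak B_{E}=\{T:(I-E)TE=0\}$, extract the projection $P\in Z(\mathfrak D)$, split $E=E_{1}+E_{2}$ along $P$, and show each block is central in a corner on which $\sigma^{\varphi}$ acts as a modular group. The paper instead applies Theorem~4.2(2) to the algebra $\mathfrak B$ generated by $E$ and $\mathfrak A$: since $E=E^{*}\in\mathfrak B$ one has $E\in\tilde{\mathfrak D}=\mathfrak B\cap\mathfrak B^{*}$, so by $\tilde{\mathfrak D}$-bimodularity $\Psi(E\mathcal M(I-E))\subseteq E\tilde{\mathfrak D}(I-E)=0$, whence $E\mathcal M(I-E)\subseteq\ker\varphi$ and $E\in\mathcal M^{\varphi}$ in two lines. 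Your argument uses only the structural half of Theorem~4.2 and avoids the second conditional expectation $\Psi$ entirely, at the cost of the block decomposition; the paper's route is shorter but leans on part~(2). One small point: your sentence ``taking $T=E=E^{*}$ yields $EP=PE$'' is correct but compressed---you need both $(I-P)EP=0$ (from $E\in\mathfrak B_{E}$ and $P\in Lat_{\mathcal M}\mathfrak B_{E}$) and $(I-E)PE=0$ (from $P\in\mathfrak B_{E}$ and the defining property of $\mathfrak B_{E}$), and then a short computation with adjoints to combine them.
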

\begin{proof}
Take any  $Q\in Lat_{\mathcal M}\mathfrak A$ and let  $\mathfrak B$ be the $\sigma$-weakly closed subalgebra generated by $Q$ and $\mathfrak A$.   Then $\mathfrak B$ is   a $\sigma_t^{\varphi}$-invariant type 1 subdiagonal algebra by Theorem 4.2.
Now  $\mathfrak B$ can be expressed as $\mathfrak B=\vee\{Q\mathfrak AQ\}+\vee\{Q\mathfrak A(I-Q)\}+\vee\{(I-Q)\mathfrak A(I-Q)\}$. Since $Q\in Lat_{\mathcal M}\mathfrak A$,  $Q\mathfrak A^*(I-Q)=0$. It follows that $Q\mathcal M(I-Q)=\vee\{Q(\mathfrak A+\mathfrak A^*)(I-Q)\}=\vee\{Q\mathfrak A(I-Q)\}$. Thus we have $Q\mathcal M(I-Q)\subseteq \mathfrak B_0$ and   $\Psi(QA(I-Q))=0$ for all $A\in\mathcal M$. This implies that  $\varphi(QA(I-Q))=\varphi\circ\Psi(QA(I-Q))=0$ for all $A\in\mathcal M$. That is $Q\mathcal M( I-Q)\subseteq\ker \varphi$. Since $\varphi$ is a state,  we also have that
  $(I-Q)\mathcal MQ\subseteq \ker \varphi$.
   For every $T\in \mathcal M$, $\varphi(TQ)=\varphi(QTQ)+\varphi((I-Q)TQ)$ and $\varphi(QT)=\varphi(QTQ)+\varphi(QT(I-Q))$. Then we have $\varphi(TQ)=\varphi(QT)$. It implies that $Q \in \mathcal M^\varphi$. Therefore, $Lat_{\mathcal M}\mathfrak A\subseteq\mathcal M^\varphi$ and we in turn obtain that $Lat_{\mathcal M}\mathfrak A \subseteq Lat_{\mathcal M^\varphi}(\mathfrak A\cap\mathcal M^\varphi)$. Note that $\mathcal M^\varphi$ is a finite von Neumann algebra and $\mathfrak A\cap\mathcal M^\varphi$ is a finite subdiagonal  algebra by \cite[Corollary 2.5]{jio}. Thus $Lat_{\mathcal M^\varphi}(\mathfrak A\cap\mathcal M^\varphi)$ is commutative by \cite[Theorem 2.1]{jig0}.  It is trivial that    $Lat_{\mathcal M}\mathfrak A$ is  also commutative.
\end{proof}

\begin{cor} Let $\mathfrak A$ be a type 1 subdiagonal algebra of $\mathcal M$. Then for any $Q\in Lat_{\mathcal M}\mathfrak A$, $Q\mathcal M(I-Q)\subseteq \mathfrak A_0$.
\end{cor}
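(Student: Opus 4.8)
The plan is to extract this corollary directly from the proof of Theorem 5.1. Indeed, that proof already establishes more than its stated conclusion: for a fixed $Q\in Lat_{\mathcal M}\mathfrak A$, it constructs the $\sigma$-weakly closed subalgebra $\mathfrak B$ generated by $Q$ and $\mathfrak A$, shows $\mathfrak B$ is a type 1 subdiagonal algebra with diagonal $\mathfrak B\cap\mathfrak B^*$ and associated expectation $\Psi$, and then observes that $Q\mathfrak A^*(I-Q)=0$ forces
\[
Q\mathcal M(I-Q)=\vee\{Q(\mathfrak A+\mathfrak A^*)(I-Q)\}=\vee\{Q\mathfrak A(I-Q)\}\subseteq\mathfrak B_0 .
\]
So the content of the corollary is to upgrade ``$Q\mathcal M(I-Q)\subseteq\mathfrak B_0$'' to ``$Q\mathcal M(I-Q)\subseteq\mathfrak A_0$''.

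The key point is the description of $\mathfrak B_0$ coming from Theorem 4.2. By part (1) of that theorem there is a projection $E\in Lat_{\mathcal M}\mathfrak A\cap\mathfrak D$ with $\mathfrak B=E\mathcal M+(I-E)\mathfrak A$, and from the proof of part (2), $\mathfrak B_0=E\mathcal M(I-E)+(I-E)\mathfrak A_0$, together with the fact (also shown there) that $E\mathcal M(I-E)\subseteq\mathfrak A_0$. Hence $\mathfrak B_0\subseteq\mathfrak A_0$ already. Therefore
\[
Q\mathcal M(I-Q)\subseteq\mathfrak B_0\subseteq\mathfrak A_0,
\]
which is exactly the assertion. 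So the proof is essentially one line once we invoke Theorem 4.2 and the inclusion $\mathfrak B_0\subseteq\mathfrak A_0$ already recorded in its proof.

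There is really no serious obstacle here; the ``hard part,'' such as it is, is purely expository: making sure the reader sees that the chain $Q\mathcal M(I-Q)\subseteq\mathfrak B_0$ (from the Theorem 5.1 argument) composes with $\mathfrak B_0\subseteq\mathfrak A_0$ (from the Theorem 4.2 argument), and that no hypothesis beyond $Q\in Lat_{\mathcal M}\mathfrak A$ is needed. I would phrase the proof as: fix $Q\in Lat_{\mathcal M}\mathfrak A$, let $\mathfrak B$ be the $\sigma$-weakly closed algebra generated by $Q$ and $\mathfrak A$; by Theorem 4.2 this $\mathfrak B$ is a type 1 subdiagonal algebra with $\mathfrak B_0\subseteq\mathfrak A_0$; since $Q\mathfrak A^*(I-Q)=0$ we get $Q\mathcal M(I-Q)=\vee\{Q\mathfrak A(I-Q)\}\subseteq\mathfrak B_0\subseteq\mathfrak A_0$, as desired. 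If one wants a self-contained statement of why $\mathfrak B_0\subseteq\mathfrak A_0$, reproduce the short computation from the proof of Theorem 4.2(2): $E$ lies in the center of $\mathfrak D$, so $\Phi(ET(I-E))=0$ for all $T\in\mathcal M$ and $E\mathcal M(I-E)=E(\mathfrak A+\mathfrak A^*)(I-E)=E\mathfrak A(I-E)\subseteq\mathfrak A$, whence $E\mathcal M(I-E)\subseteq\mathfrak A_0$ and thus $\mathfrak B_0=E\mathcal M(I-E)+(I-E)\mathfrak A_0\subseteq\mathfrak A_0$.
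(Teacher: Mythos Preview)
Your argument is correct, but it is genuinely different from the paper's own proof of Corollary~5.2. The paper does not chain the inclusions $Q\mathcal M(I-Q)\subseteq\mathfrak B_0\subseteq\mathfrak A_0$; instead it argues directly via the centralizer and the $H^1$-duality: from Theorem~5.1 one has $Q\in\mathcal M^{\varphi}$, hence $Qh_0=h_0Q$, and then for $A\in\mathfrak A$ and $T\in\mathcal M$ one computes $\mathrm{tr}(Ah_0QT(I-Q))=\mathrm{tr}((I-Q)AQ\,h_0T)=0$, so $QT(I-Q)\in(H^1)^{\perp}=\mathfrak A_0$.

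Your route is more economical: it simply composes two facts already recorded in the paper (the inclusion $Q\mathcal M(I-Q)\subseteq\mathfrak B_0$ from the proof of Theorem~5.1 and $\mathfrak B_0\subseteq\mathfrak A_0$ from the proof of Theorem~4.2(2)), and avoids the trace computation and the identification $(H^1)^{\perp}=\mathfrak A_0$ entirely. The paper's route, by contrast, is self-contained at the level of Corollary~5.2 and does not require tracking the auxiliary algebra $\mathfrak B$ or the projection $E$; it also makes explicit the useful fact $Qh_0=h_0Q$. Either proof is perfectly acceptable.
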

\begin{proof} Note that $Q\in\mathcal M^{\varphi}$ and $Q\mathcal M(I-Q))\subseteq \ker \varphi$. Thus
 $\mbox{tr}(Th_0Q)=\mbox{tr}(QTh_0)=\varphi(QT)=\varphi(TQ)=\mbox{tr}(TQh_0)$ for all $T\in\mathcal M$. That is,
 $\mbox{tr}(T(h_0Q-Qh_0))=0$ for all $T\in\mathcal M$. Thus $Qh_0=h_0Q$. Furthermore, $\mbox{tr}(Ah_0QT(I-Q))=\mbox{tr}(AQh_0T(I-Q))=\mbox{tr}((I-Q)AQh_0T)=0$ for all $A\in \mathfrak A$. It follows that $QT(I-Q)\in (H^1)^{\bot}=\mathfrak A_0$ for all $T\in\mathcal M$.
\end{proof}
Similar to \cite[Theorem 2.4]{jig0}, we  next  show that  $Lat_{\mathcal M}\mathfrak A$  is a complete lattice generated by a nest in $Lat_{\mathcal M}\mathfrak A$ together with the lattice of projections in the center of $\mathcal M$.

\begin{thm}
Let $\mathfrak A$ be a type 1 subdiagonal algebra of $\mathcal M$. Then the relative invariant subspace lattice $Lat_{\mathcal M}\mathfrak A$ of $\mathfrak A$ in $\mathcal M$ is the complete lattice generated by a nest in $Lat_{\mathcal M}\mathfrak A$ together with the lattice of projections in the center of $\mathcal M$.
\end{thm}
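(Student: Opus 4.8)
The plan is to follow the pattern of \cite[Theorem 2.4]{jig0}. By Theorem 5.1 we already know that $Lat_{\mathcal M}\mathfrak A$ is commutative, and $Lat_{\mathcal M}\mathfrak A$ is always a complete lattice of projections containing every projection in the center $Z(\mathcal M)$ of $\mathcal M$. So what remains is to show that, after cutting by suitable central projections, any two members of $Lat_{\mathcal M}\mathfrak A$ are comparable, and then to assemble a single global nest by a Zorn's lemma argument.

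First I would prove the following local comparability lemma. Let $P,Q\in Lat_{\mathcal M}\mathfrak A$; since $PQ=QP$, the operators $R_1=P\wedge Q^{\perp}=P(I-Q)$ and $R_2=P^{\perp}\wedge Q=(I-P)Q$ are projections with $R_1R_2=0$. Using $(I-P)\mathfrak A P=0$ and $(I-Q)\mathfrak A Q=0$ together with $PQ=QP$, a short computation (rewriting $AP=PAP$, respectively $AQ=QAQ$, and pushing projections past each other) gives $R_1\mathfrak A R_2=R_2\mathfrak A R_1=0$; taking adjoints yields $R_1\mathfrak A^{*}R_2=R_2\mathfrak A^{*}R_1=0$, hence $R_1(\mathfrak A+\mathfrak A^{*})R_2=R_2(\mathfrak A+\mathfrak A^{*})R_1=0$. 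Since $\mathfrak A+\mathfrak A^{*}$ is $\sigma$-weakly dense in $\mathcal M$ and multiplication is separately $\sigma$-weakly continuous, it follows that $R_1\mathcal M R_2=R_2\mathcal M R_1=0$, and therefore the central covers $c(R_1)$ and $c(R_2)$ are orthogonal.

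Now set $z=c(R_1)$ and $z'=c(R_2)$; these are central projections, hence members of $Lat_{\mathcal M}\mathfrak A$, and $zz'=0$. Since $R_2\le z'$ and $R_1\le z$ we get $zR_2=0$, $z'R_1=0$ and $(I-z-z')R_1=(I-z-z')R_2=0$; equivalently $zQ\le zP$, $z'P\le z'Q$ and $(I-z-z')P=(I-z-z')Q$. Thus on each summand of the three-fold central decomposition $I=z+z'+(I-z-z')$ the projections $P$ and $Q$ are comparable. With this in hand I would pick, by Zorn's lemma, a maximal linearly ordered subfamily $\mathcal N\subseteq Lat_{\mathcal M}\mathfrak A$, i.e. a nest, and then argue exactly as in \cite[Theorem 2.4]{jig0} that an arbitrary $Q\in Lat_{\mathcal M}\mathfrak A$ can be recovered from the elements of $\mathcal N$ by cutting with central projections and taking sups and infs; combined with the trivial reverse inclusion this identifies $Lat_{\mathcal M}\mathfrak A$ with the complete lattice generated by $\mathcal N$ together with the lattice of projections in $Z(\mathcal M)$.

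I expect the main obstacle to be this last assembling step: passing from the merely pointwise (per central summand) comparability to a single global nest, and verifying that every element of $Lat_{\mathcal M}\mathfrak A$ indeed lies in the complete lattice generated by $\mathcal N$ and $Z(\mathcal M)_p$. This is where completeness of $Lat_{\mathcal M}\mathfrak A$ (closure under arbitrary sups and infs, and the fact that $c(R_1),c(R_2)\in Z(\mathcal M)_p\subseteq Lat_{\mathcal M}\mathfrak A$) is used, and it runs in parallel with the finite case in \cite[Theorem 2.4]{jig0}; the computations giving $R_i\mathfrak A R_j=0$ and the orthogonality $c(R_1)c(R_2)=0$ are routine and I would not belabor them.
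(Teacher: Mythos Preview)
Your approach is sound and reaches the same key intermediate fact as the paper---that for any $E,F\in Lat_{\mathcal M}\mathfrak A$ the central carriers of $E(I-F)$ and $(I-E)F$ are orthogonal---but you arrive there by a more elementary route. The paper first invokes Corollary~5.2 (which rests on the full type~1 machinery of Theorem~4.2) to obtain $E\mathcal M(I-E)\subseteq\mathfrak A_0$, and then deduces $(I-F)E\mathcal M(I-E)F\subseteq(I-F)\mathfrak A F=\{0\}$; you bypass this entirely with the direct computation $R_1\mathfrak A R_2=R_2\mathfrak A R_1=0$ (using only $PQ=QP$ and the invariance conditions) and then close up via the $\sigma$-weak density of $\mathfrak A+\mathfrak A^*$. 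Your derivation has the advantage of not needing Corollary~5.2 at all.

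Where the paper is more efficient is precisely the ``assembling step'' you flag as the main obstacle. Rather than selecting a maximal chain by Zorn's lemma and then arguing by hand that it generates the lattice together with the central projections, the paper feeds the central-carrier orthogonality condition directly into \cite[Theorem~4.2]{gil} (Gilfeather--Larson), which shows that the reflexive algebra $AlgLat_{\mathcal M}\mathfrak A$ is a nest subalgebra $Alg\mathcal N\cap\mathfrak R$ of some von Neumann algebra $\mathfrak R$ for a nest $\mathcal N\subseteq Lat_{\mathcal M}\mathfrak A$. From $\mathfrak A\subseteq AlgLat_{\mathcal M}\mathfrak A\subseteq\mathfrak R$ and density of $\mathfrak A+\mathfrak A^*$ one gets $\mathcal M\subseteq\mathfrak R$, so $AlgLat_{\mathcal M}\mathfrak A\cap\mathcal M=Alg\mathcal N\cap\mathcal M$, and the conclusion then follows exactly as in \cite[Theorem~2.4]{jig0}. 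In short, the nest is handed to you by the Gilfeather--Larson structure theory rather than constructed ad hoc; if you do not want to essentially reprove part of that theorem in your Zorn's-lemma step, you can simply quote \cite[Theorem~4.2]{gil} at that point and the rest of your argument goes through.
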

\begin{proof}It is known that $Lat_{\mathcal M}\mathfrak A$ is commutative by Theorem 5.1.
 Take any  $E,F\in Lat_{\mathcal M}\mathfrak A$.   Then $E\mathcal M(I-E)\subseteq \mathfrak A_0$  by Corollary 5.2. Thus we get $(I-F) E\mathcal M(I-E)F\subseteq (I-F)\mathfrak AF=\{0\}$ and we in turn have $C_{E(I-F)}C_{(I-E) F}=0$, where $C_P$ denotes the central carrier of a projection $P$.

Note that $Lat_{\mathcal M}\mathfrak A$ contains the lattice of projections in the center of $\mathcal M$. We next show that $Lat_{\mathcal M}\mathfrak A$ satisfies the condition of \cite[Theorem 4.2]{gil}. In fact, for $E,F\in Lat_{\mathcal M}\mathfrak A$, we have $C_{E(I-F)}C_{(I-E) F}=0$, then $E(I-F)\leq C_{E(I-F)}$ while $(I-E) F\leq I-  C_{E(I-F)} $. It follows that the reflexive operator algebra
$$AlgLat_{\mathcal M}\mathfrak A=\{T\in\mathcal B(\mathcal H): (I-P)TP=0,\forall P\in Lat_{\mathcal M}\mathfrak A\}$$ with commutative subspaces lattice $Lat_{\mathcal M}\mathfrak A$ is a nest subalgebra of a von Neumann algebra $\mathfrak R$  with a nest $\mathcal N$ in $Lat_{\mathcal M}\mathfrak A$ by \cite[Theorem 4.2]{gil}. Thus $AlgLat_{\mathcal M}\mathfrak A=Alg\mathcal N\cap \mathfrak R$.
Note that $\mathfrak R\supseteq AlgLat_{\mathcal M}\mathfrak A\supseteq AlgLat_{\mathcal M}\mathfrak A\cap \mathcal M\supseteq \mathfrak A$. Then $\mathfrak R\supseteq [\mathfrak A+\mathfrak A^*]_{\infty}=\mathcal M$.
  Therefore, $AlgLat_{\mathcal M}\mathfrak A \cap \mathcal M=Alg\mathcal N\cap \mathfrak R\cap \mathcal M=Alg\mathcal N\cap \mathcal M$.   Similar to  \cite[Theorem 2.4]{jig0}, we have the desired result.
\end{proof}
\begin{cor}
Let $\mathfrak A$ be a type 1 subdiagonal algebra of $\mathcal M$. If $\mathcal M$ is a factor, then the relative invariant subspace  lattice $Lat_{\mathcal M}\mathfrak A$ of $\mathfrak A$ is a nest.
\end{cor}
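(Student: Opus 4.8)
The plan is to obtain this immediately from Theorem 5.3, or—what amounts to the same thing but is shorter—directly from Corollary 5.2 together with the factor hypothesis. Since the paper defines a subspace lattice to be a nest precisely when it is linearly ordered by inclusion, the goal is to show that for any two $E,F\in Lat_{\mathcal M}\mathfrak A$ one has $E\leq F$ or $F\leq E$.

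First I would invoke Theorem 5.3: it presents $Lat_{\mathcal M}\mathfrak A$ as the complete lattice generated by a nest $\mathcal N\subseteq Lat_{\mathcal M}\mathfrak A$ together with the lattice of projections in the center $Z(\mathcal M)$ of $\mathcal M$. When $\mathcal M$ is a factor, $Z(\mathcal M)=\mathbb CI$, so the only central projections are $0$ and $I$, which already belong to $\mathcal N$. Hence the generated complete lattice collapses to $\mathcal N$ itself, and $Lat_{\mathcal M}\mathfrak A=\mathcal N$ is a nest.

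Alternatively, and self-containedly, I would argue directly. Take $E,F\in Lat_{\mathcal M}\mathfrak A$. By Corollary 5.2, $E\mathcal M(I-E)\subseteq\mathfrak A_0$, so exactly as in the proof of Theorem 5.3 one gets $(I-F)\,E\mathcal M(I-E)\,F\subseteq(I-F)\mathfrak A_0F\subseteq(I-F)\mathfrak AF=\{0\}$, and therefore $C_{E(I-F)}C_{(I-E)F}=0$, where $C_P$ is the central carrier of $P$. In a factor the central carrier of every nonzero projection equals $I$; thus $C_{E(I-F)}C_{(I-E)F}=0$ forces $E(I-F)=0$ or $(I-E)F=0$, i.e. $E\leq F$ or $F\leq E$. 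Consequently $Lat_{\mathcal M}\mathfrak A$ is totally ordered by inclusion, hence a nest.

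There is essentially no obstacle here: the corollary is a formal consequence of the structural description already established. The only points requiring a word of care are the observation that the lattice of central projections of a factor is trivial (so it contributes nothing to the lattice generated in Theorem 5.3), and the standard fact that in a factor nonzero projections have central carrier $I$; both are immediate.
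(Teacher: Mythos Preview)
Your proposal is correct and matches the paper's intent: the corollary is stated without proof immediately after Theorem 5.3, so the intended argument is precisely your first one---in a factor the lattice of central projections is $\{0,I\}$, hence $Lat_{\mathcal M}\mathfrak A$ reduces to the nest $\mathcal N$. Your alternative direct argument via Corollary 5.2 and central carriers is also valid and is essentially the factor specialization of the computation in the proof of Theorem 5.3.
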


\end{document}